\newtheorem{lemma}{Lemma}[section]%
\newtheorem{theorem}[lemma]{Theorem}%
\newtheorem{proposition}[lemma]{Proposition}%
\newtheorem{corollary}[lemma]{Corollary}%
\def\A{\hbox{\rm A}}
\def\S{\hbox{\rm S}}
\def\PSL{\hbox{\rm PSL}}
\def\PSp{\hbox{\rm PSp}}
\def\PSU{\hbox{\rm PSU}}
\def\M{\hbox{\rm M}}
\def\P{\hbox{\rm P}}
\def\SL{\hbox{\rm SL}}
\def\Sp{\hbox{\rm Sp}}
\def\AGL{\hbox{\rm AGL}}
\def\J{\hbox{\rm J}}
\def\PGO{\hbox{\rm PGO}}
\def\GL{\hbox{\rm GL}}
\def\PGL{\hbox{\rm PGL}}
\def\Aut{\hbox{\rm Aut}}
\def\Inn{\hbox{\rm Inn}}
\def\Cay{\hbox{\rm Cay}}
\def\F{\hbox{\rm F}}
\def\Out{\hbox{\rm Out}}
\def\soc{\hbox{\rm soc}}
\numberwithin{equation}{section}
\def\mz{{\mathbb Z}}
\begin{document}
\title{Symmetric Cayley graphs on non-abelian simple groups of valency $7$}
\author{\\Xing Zhang, Yan-Quan Feng, Fu-Gang Yin\footnotemark, Hong Wang\\
{\small\em School of mathematics and statistics, Beijing Jiaotong University, Beijing, 100044, P.R. China}\\
}

\renewcommand{\thefootnote}{\fnsymbol{footnote}}
\footnotetext[1]{Corresponding author.
E-mails:
21118006@bjtu.edu.cn (X. Zhang),
yqfeng@bjtu.edu.cn (Y.-Q. Feng),
fgyin@bjtu.edu.cn (F.-G. Yin),
13522941618@163.com (H. Wang)}

\date{}
\maketitle

\begin{abstract}
Let $\Gamma$ be a connected $7$-valent symmetric Cayley graph on a finite non-abelian simple group $G$. If $\Gamma$ is not normal, Li {\em et al.} [On 7-valent symmetric Cayley graphs of finite simple groups, J. Algebraic Combin. 56 (2022) 1097-1118] characterised the group pairs $(\mathrm{soc}(\mathrm{Aut}(\Gamma)/K),GK/K)$, where $K$ is a maximal intransitive normal subgroup of $\mathrm{Aut}(\Gamma)$. In this paper, we improve this result by proving that if $\Gamma$ is not normal, then $\mathrm{Aut}(\Gamma)$ contains an arc-transitive non-abelian simple normal subgroup $T$ such that $G<T$ and $(T,G)=(\mathrm{A}_{n},\mathrm{A}_{n-1})$ with $n=7$, $3\cdot 7$, $3^2\cdot 7$, $2^2\cdot 3\cdot 7$, $2^3\cdot3\cdot7$, $2^3\cdot3^2\cdot5\cdot7$, $2^4\cdot3^2\cdot5\cdot7$, $2^6\cdot3\cdot7$, $2^7\cdot3\cdot7$, $2^6\cdot3^2\cdot7$, $2^6\cdot3^4\cdot5^2\cdot7$, $2^8\cdot3^4\cdot5^2\cdot7$, $2^7\cdot3^4\cdot5^2\cdot7$, $2^{10}\cdot3^2\cdot7$, $2^{24}\cdot3^2\cdot7$. Furthermore, $\mathrm{soc}(\mathrm{Aut}(\Gamma)/R)=(T\times R)/R$, where $R$ is the largest solvable normal subgroup of $\mathrm{Aut}(\Gamma)$.
\bigskip

\noindent {\bf Key words:} symmetric graph, Cayley graph, non-abelian simple group.\\
{\bf 2010 Mathematics Subject Classification:} 05C25, 20B25.
\end{abstract}

\section{Introduction}

For a graph $\Gamma$, denote by $V(\Gamma)$, $E(\Gamma)$ and $\Aut(\Gamma)$ the vertex set, edge set and automorphism group of $\Gamma$, respectively. Let $X\leq \Aut(\Gamma)$. The graph $\Gamma$ is called \emph{$X$-vertex-transitive} or \emph{$X$-arc-transitive} if $X$ is  transitive on $V(\Gamma)$ or on the arc set of $\Gamma$, where an arc is an ordered pair of adjacent vertices. Furthermore, $X$-arc-transitive is also called {\em $X$-symmetry}, and $\Gamma$ is called {\em symmetry} or {\em arc-transitive} if it is $\Aut(\Gamma)$-symmetry.
The group $X$ is called  \emph{semiregular} if for every $v\in V(\Gamma)$, the stabilizer $X_v$ (the subgroup of $X$ fixing $v$) is trivial, and  \emph{regular} if $X$ is transitive and semiregular. Throughout this paper, graph is connected, finite and simple, and group is finite.

Let $G$ be a finite group and $S$ a subset of $G$ such that $1\not\in S$ and $S^{-1}=S$.
The \emph{Cayley graph} $\Cay(G, S)$ of $G$ is defined as the graph with vertex set $G$ and  two vertices $x$ and $y$ being adjacent if and only if $yx^{-1}\in S$. It is easy to see that $\Cay(G, S)$ is connected if and only if $S$ generates group $G$, and a graph $\Gamma$ is isomorphic to a Cayley graph of $G$ if and only if $\Gamma$ has a regular group of automorphisms isomorphic to $G$.

Let $\Gamma=\Cay(G,S)$. For $g\in G$, define $R(g): x\mapsto xg$ for all $x\in G$. Then $R(g)\leq \Aut(\Gamma)$ and $R(G):=\{R(g)\ |\ g\in G\}$ is regular subgroup of $\Aut(\Gamma)$, called the right regular representation of $G$. The graph $\Gamma$ is called \emph{normal} if $R(G)\unlhd \Aut(\Gamma)$, that is, $R(G)$ is a normal subgroup of $\Aut(\Gamma)$. Set $\Aut(G,S)=\{\alpha\in\Aut(G)\ |\ S^\alpha=S\}$. It is easy to see that $R(g)^\alpha=R(g^\alpha)$ for every $\alpha\in \Aut(G,S)$ and every $g\in G$, which implies that $R(G)$ is normalized by $\Aut(G,S)$. Since $R(G)$ is regular and $\Aut(G,S)$ fixes $1$, we have $R(G)\Aut(G,S)=R(G)\rtimes \Aut(G,S)$. By  Godsil~\cite{Godsil}, the normalizer $N_{\Aut(\Gamma)}(R(G))$ of $R(G)$ in $\Aut(\Gamma)$ is $R(G)\rtimes \Aut(G,S)$. It follows that if $\Gamma$ is normal, then based on $G$ and $\Aut(G,S)$, $\Aut(\Gamma)$ is explicitly known, that is, $R(G)\rtimes \Aut(G,S)$. For this reason, to investigate Cayley graphs, we mainly concern non-normal ones in most cases.

In this paper, we study connected arc-transitive Cayley graph $\Gamma=\Cay(G,S)$ on a  non-abelian simple group $G$, which is currently a hot topic in algebraic graph theory. Let $\Gamma$ be non-normal and of valency $3$. In 1996, Li~\cite{Li} proved that $G=\A_5$, $\A_7$, $\PSL(2,11)$, $\M_{11}$, $\A_{11}$, $\A_{15}$, $\M_{23}$, $\A_{23}$ or $\A_{47}$. This was refined in 2005 by Xu {\em et al.}~\cite{XFWX1} that $G= \A_{47}$, and two years later, it was further refined in \cite{XFWX2} that for $G=\A_{47}$, $\Gamma$ must be $5$-arc-transitive and there are exactly two such graphs up to isomorphism, which is a remarkable achievement on this line.

Let $\Gamma$ be non-normal and of valency $5$. It follows from Fang~{\em et al.}~\cite{FMW} that if $\Aut(\Gamma)$ is quasiprimitive then $(G,\mathrm{soc}({\rm Aut(\Gamma))})=(\A_{n-1}, \A_n)$, where either $n=60\cdot k$ with $k \ |\ 2^{15}\cdot3$ and $k \neq 3, 4, 6, 8$, or $n = 10\cdot m$ with $m\ |\ 8$, and that if $\Aut(\Gamma)$ is not quasiprimitive then there is a maximal intransitive normal subgroup $K$ of $\Aut(\Gamma)$  such that the socle of $\Aut(\Gamma)/K $, denoted by $\bar{L}$, is a simple group containing $\bar{G}=GK/K\cong G$ properly, where either $(\bar{G},\bar{L})=(\Omega^-(8,2),\rm \PSp(8,2))$, $(\A_{14},\A_{16})$, $(\A_{n-1},\A_n)$ with $n\geq 6$ such that $n$ is a divisor of $2^{17}\cdot 3^2\cdot 5$, or $\bar{G}=\bar{L}$ is isomorphic to an irreducible subgroup of $\PSL(d,2)$ for $4\leq d \leq 17$ and the $2$-part $|G|_2>2^d$. This was refined by Du {\em et al.}~\cite{DFZh} that $\Aut(\Gamma)$ contains a non-abelian simple normal subgroup $T$ such that $G\leq T$ and $(G,T)=(\A_{n-1},\A_n)$ with $n=2\cdot 3$, $2^2\cdot 3$, $2^4$, $2^3\cdot 3$, $2^5$, $2^2\cdot 3^2$, $2^4\cdot 3$, $2^3\cdot 3^2$, $2^5\cdot 3$, $2^4\cdot 3^2$, $2^6\cdot 3$,  $2^5\cdot 3^2$, $2^7\cdot 3$, $2^6\cdot 3^2$, $2^7\cdot 3^2$, $2^8\cdot 3^2$ or $2^9\cdot 3^2$.

Let $\Gamma$ be non-normal and of valency $7$.  Pan {\em et al.}~\cite{PYL19} proved that if $\Aut(\Gamma)$ has solvable vertex stabilizers, then $G=\A_{n-1}$ and $\Gamma$ is $\A_n$-arc-transitive with $n = 7, 21, 63$ or $84$, and this was generalized to any prime $p\geq 11$ by Yin {\em et al.}~\cite{YFZC2021} that $\Aut(\Gamma)$ contains an arc-transitive non-abelian simple subgroup $T$ such that $G<T$ and $(G, T, p)=(\A_5, \PSL(2,11), 11)$, $(\A_5, \PSL(2,29), 29)$, $(\M_{22}, \M_{23}, 23)$, or $(\A_{n-1}, \A_n, p)$ with $n=pk\ell$, $k\ |\ \ell$ and $\ell\ |\ (p-1)$, where $k$ and $\ell$ have the same parity.
Recently, with no restriction on stabilizer for valency $7$, Li {\em et al.}~\cite{LMZ2022} characterised the group pairs  $(\soc(\Aut(\Gamma)/K),GK/K)$, where $K$ is a maximal intransitive normal subgroup of $\Aut(\Gamma)$.
Their result is as follows:

\begin{theorem}[{\cite[Theorem~1.1]{LMZ2022}}]\label{th:LMZ}
Let $G$ be a finite non-abelian simple group and let $\Gamma = \mathrm{Cay}(G,S)$ be a connected symmetric Cayley graph of valency $7$ on $G$ with $\alpha \in V(\Gamma)$. Then either $\Gamma$ is a  normal Cayley graph or one of the following holds:
\begin{enumerate}[\rm (1)]
\item $\mathrm{Aut}(\Gamma)$ acts quasiprimitively on $V(\Gamma)$ with $L = \mathrm{soc}(\mathrm{Aut}(\Gamma))$ being a non-abelian  simple group, and  $(L,G)=(\mathrm{P\Omega}^{+}(12,2),\mathrm{\Omega}(11,2))$, $(\mathrm{Sp}(6,4),\mathrm{PSU}(4,4))$ or $(\mathrm{A}_7,\mathrm{A}_6)$.
\item There exists a maximal intransitive normal subgroup $K$ of $A = \mathrm{Aut}(\Gamma)$ such that  the socle of $A/K$, say $\overline{L}$, is a simple group containing $\overline{G} = GK/K \cong G$ properly.
 Moreover, one of the following holds.
 \begin{enumerate}[\rm (i)]
 \item $\overline{L}$ is classical simple group, and $(\mathrm{Aut}(\Gamma),G)$ is one of the seven cases in Table~\ref{tb:LMZ}.
 \item Otherwise, up to isomorphism, we have either $(\mathrm{Aut}(\Gamma),G)\cong (\mathbb{Z}_7 \times  \mathrm{M}_{24}, \mathrm{M}_{23}) $, or $(\overline{L},\overline{G})=(\mathrm{A}_n,\mathrm{A}_{n-1})$ with $\overline{L}_{\overline{\alpha}}$ has a subgroup of index $n$ for $\overline{\alpha} \in V(\Gamma_K)$.
 \end{enumerate}
\end{enumerate}
\end{theorem}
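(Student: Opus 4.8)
The plan is to combine a local analysis of the vertex stabiliser with Praeger's normal quotient reduction and the Liebeck–Praeger–Saxl theory of factorisations of almost simple groups. Write $A=\Aut(\Gamma)$ and fix $\alpha\in V(\Gamma)$. If $R(G)\unlhd A$ then $\Gamma$ is normal and there is nothing to prove, so assume throughout that $R(G)$ is not normal. Since $\Gamma$ is connected, $A$-arc-transitive and of prime valency $7$, the first step is to pin down $A_\alpha$: the induced action $A_\alpha^{\Gamma(\alpha)}$ is a transitive permutation group of degree $7$, hence lies among $\mz_7$, $D_{14}$, $F_{21}$, $\AGL(1,7)$, $\PSL(2,7)$, $\A_7$, $\S_7$, and by the amalgam/Weiss analysis for valency $7$ the stabiliser $A_\alpha$ itself is one of a short, explicit list of $\{2,3,7\}$-groups of bounded order. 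I would record this as a lemma, since it controls every subsequent case.

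Next I would carry out the normal quotient reduction. Let $K$ be a maximal intransitive normal subgroup of $A$. Because $R(G)$ is transitive and simple while $K$ is intransitive, $R(G)\cap K=1$, so $\bar G:=GK/K\cong G$; and because $\bar G$ is simple and transitive on the set of $K$-orbits, $K$ has at least three orbits, whence (valency being the prime $7$) $K$ is semiregular, the quotient $\Gamma_K$ is $\bar A:=A/K$-arc-transitive of valency $7$ with $\bar A_{\bar\alpha}\cong A_\alpha$, and $\bar A$ acts quasiprimitively on $V(\Gamma_K)$. The case $K=1$ is exactly the quasiprimitive-on-$V(\Gamma)$ alternative (1), and $K\ne1$ feeds alternative (2); in either situation $\bar G\cong G$ is a transitive non-abelian simple subgroup of the quasiprimitive group $\bar A$, and $\bar A=\bar G\,\bar A_{\bar\alpha}$.

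The heart of the argument is to classify the quasiprimitive pair $(\bar A,\bar G)$ subject to the constraint that $\bar A_{\bar\alpha}$ is one of the small stabilisers from the first step. Running through the O'Nan–Scott types for quasiprimitive groups, the presence of a transitive simple subgroup together with the bounded $\{2,3,7\}$-local stabiliser eliminates the affine, twisted-wreath, and the various diagonal types (in each, either no regular-type simple subgroup can appear or the stabiliser is forced to be too large), leaving the almost simple and product-action types. In the almost simple case, $\bar L:=\soc(\bar A)$ is simple and the factorisation $\bar A=\bar G\,\bar A_{\bar\alpha}$ with one factor a $\{2,3,7\}$-group is matched against the Liebeck–Praeger–Saxl tables of maximal factorisations; imposing that $\bar A_{\bar\alpha}$ carry a transitive action of degree $7$ prunes these to the classical pairs $(\mathrm{P\Omega}^{+}(12,2),\mathrm{\Omega}(11,2))$ and $(\Sp(6,4),\PSU(4,4))$, the sporadic $(\M_{24},\M_{23})$, and the alternating family $(\A_n,\A_{n-1})$, while the remaining product-action configurations collapse to $(\A_7,\A_6)$. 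Reading these back through the quotient—with $K=1$ giving the three pairs of part (1) and $K\ne1$ giving part (2), where $\bar L_{\bar\alpha}$ must then possess a subgroup of index $n$—yields the stated list, Table~\ref{tb:LMZ} recording the seven classical sub-cases.

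The main obstacle I anticipate is the factorisation bookkeeping: matching each admissible $\bar A_{\bar\alpha}$ against the LPS maximal factorisations and verifying, factorisation by factorisation, both that $\bar A_{\bar\alpha}$ admits a transitive degree-$7$ quotient action and that the resulting amalgam is realised by a connected $7$-valent graph. Secondary care is needed to rule out the product-action type cleanly (showing the only survivor is $(\A_7,\A_6)$) and to confirm that $K\ne1$ genuinely forces $\bar G<\bar L$, so that the non-normality hypothesis is not lost in passing to the quotient.
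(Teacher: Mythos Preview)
This theorem is not proved in the present paper at all: it is quoted verbatim from \cite[Theorem~1.1]{LMZ2022} as prior work, and the paper's contribution is the sharper Theorem~\ref{th:main-vald}, which is proved via Lemmas~3.1--3.5 using the radical $R=\mathrm{rad}(\Aut(\Gamma))$ rather than a maximal intransitive normal subgroup $K$. So there is no ``paper's own proof'' of this statement to compare against; your outline is in fact a sketch of what Li--Ma--Zhu do in \cite{LMZ2022}, and its broad architecture (stabiliser list, normal quotient, quasiprimitive reduction, factorisation tables) is correct for that purpose.

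That said, your sketch contains a concrete error that would derail the factorisation step. You assert that $A_\alpha$ is a $\{2,3,7\}$-group, but Proposition~\ref{pro:vertex-stab} shows this is false: for $s=2,3$ one has $A_\alpha\cong \A_7$, $\S_7$, $\A_7\times\A_6$, $(\A_7\times\A_6)\rtimes\mz_2$, $\S_7\times\S_6$, all of order divisible by $5$ (indeed $|A_\alpha|$ can reach $2^8\cdot3^4\cdot5^2\cdot7$). Restricting to $\{2,3,7\}$-factors in the Liebeck--Praeger--Saxl tables would miss precisely the factorisations that produce, e.g., Rows~1--4, 6--8, 13--18 of Table~\ref{tb:factorization 2}, and hence the pairs $(\mathrm{P}\Omega^+(12,2),\Omega(11,2))$ and $(\A_7,\A_6)$ in part~(1). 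A second, smaller point: $(\A_7,\A_6)$ is not a product-action survivor but an almost simple one; the product-action type is eliminated outright here because a transitive simple subgroup forces the base group to have a regular simple factor, which your own argument about $K$ having at least three orbits already handles. Finally, the paper flags (and fixes, via Lemma~\ref{lm:condi}(2)) a genuine gap in \cite{LMZ2022} at the step ``$\bar L\neq\bar G$'', which is exactly the ``secondary care'' you mention in your last sentence --- so that concern is well placed.
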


\begin{table}[h]
\begin{center}
\caption{Candidates for $(\mathrm{Aut}(\Gamma) ,G)$ when $\overline{L}$ is classical.}\label{tb:LMZ}
\begin{tabular}{l|l|l}
\hline
& $\mathrm{Aut}(\Gamma)$ & $G$\\ \hline
1&  $[2^{18}].\mathrm{P\Omega}^{+}(12,2)$& $\mathrm{\Omega} (11,2)$\\
2& $(\mathbb{Z}_7\rtimes \mathbb{Z}_3) \times \mathrm{A}_8 $ & $\mathrm{PSL} (3,2)$ \\
3 &$((\mathbb{Z}_7\rtimes \mathbb{Z}_3) \times \mathrm{A}_8)\rtimes \mathbb{Z}_2 $ & $\mathrm{PSL}(3,2)$ \\
4& $\mathbb{Z}_2.\mathrm{PSU} (6,2)$& $ \mathrm{PSU}(5,2)$ \\
5&$\mathbb{Z}_2^2.\mathrm{PSU}(6,2)$& $ \mathrm{PSU}(5,2)$ \\
 &$\mathbb{Z}_4.\mathrm{PSU}(6,2)$& \\
6&$\mathbb{Z}_2^4.\mathrm{Sp}(6,4)$ &$ \mathrm{PSU}_4(4)$ \\
7&$(\mathbb{Z}_7\rtimes \mathbb{Z}_3) \times \mathrm{AGL}(3,2) $ & $\mathrm{PSL}(3,2)$\\
\hline
\end{tabular}
\end{center}
\end{table}

The motivation of this paper is to enhance the above result. We will prove that Theorem~\ref{th:LMZ}(1) except $(L,G) = (\mathrm{A}_7, \mathrm{A}_6)$, and Theorem~\ref{th:LMZ}(2)(i) cannot occur. Furthermore, only certain cases of Theorem~\ref{th:LMZ}(2)(ii) are possible, as outlined below in our Theorem~\ref{th:main-vald}:

\begin{theorem}\label{th:main-vald}
Let $\Gamma$ be a connected symmetric $7$-valent Cayley graph on a non-abelian simple group $G$. Then $\Gamma$ is normal or $\Aut(\Gamma)$ contains an arc-transitive non-abelian simple normal subgroup $T$ such that $R(G)<T$ and $(T,R(G))=(\A_{n},\A_{n-1})$ with $n=7$, $3\cdot 7$, $3^2\cdot 7$, $2^2\cdot 3\cdot 7$, $2^3\cdot3\cdot7$, $2^3\cdot3^2\cdot5\cdot7$, $2^4\cdot3^2\cdot5\cdot7$, $2^6\cdot3\cdot7$, $2^7\cdot3\cdot7$,
 $2^6\cdot3^2\cdot7$, $2^6\cdot3^4\cdot5^2\cdot7$, $2^7\cdot3^4\cdot5^2\cdot7$, $2^8\cdot3^4\cdot5^2\cdot7$, $2^{10}\cdot3^2\cdot7$, $2^{24}\cdot3^2\cdot7$. Moreover, $\soc(\Aut(\Gamma)/R)=(T\times R)/R$, where $R$ is the largest solvable normal subgroup of $\Aut(\Gamma)$.
\end{theorem}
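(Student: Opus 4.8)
The plan is to start from Theorem~\ref{th:LMZ} and eliminate the cases that cannot arise, then refine the surviving cases from statements about quotient groups $\soc(\Aut(\Gamma)/K)$ to statements about actual normal subgroups of $\Aut(\Gamma)$. First I would dispose of Theorem~\ref{th:LMZ}(1): the quasiprimitive cases $(L,G)=(\mathrm{P\Omega}^{+}(12,2),\Omega(11,2))$ and $(\mathrm{Sp}(6,4),\mathrm{PSU}(4,4))$ should be ruled out by a stabilizer-order and subdegree analysis — since $\Gamma$ is $7$-valent and $G$ is regular, $|L|=7|G|\cdot|L_\alpha|/|G_\alpha|$ forces tight arithmetic constraints, and in these two cases one checks (using the known maximal subgroup structure / the Atlas) that no index-$|L:G|$ arc-transitive configuration of valency $7$ exists; equivalently, $G$ cannot be a regular subgroup complementing a suitable point stabilizer. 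The remaining quasiprimitive case $(\mathrm{A}_7,\mathrm{A}_6)$ is exactly $(T,R(G))=(\A_7,\A_6)$, i.e.\ $n=7$ in our list, so it is kept.

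Next I would handle Theorem~\ref{th:LMZ}(2)(i), the seven rows of Table~\ref{tb:LMZ}. Here one has $A=\Aut(\Gamma)$ with a maximal intransitive normal subgroup $K$ such that $\overline L=\soc(A/K)$ is a classical group properly containing $\overline G\cong G$. The strategy is: because $K$ is intransitive and normal, the quotient graph $\Gamma_K$ is well defined, $A/K$ acts on it, and $K$ acts semiregularly (being the kernel, with $\Gamma$ a cover of $\Gamma_K$); one then pulls information about valency and local action back and forth. For each row I would examine whether $\overline G$ can be a vertex-regular subgroup of $\overline L$ acting on a $7$-valent $\overline L$-arc-transitive graph — this is again an index/subdegree computation together with a check of maximal subgroups of $\overline L$ containing $\overline G$. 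For instance rows 2, 3, 7 have $\overline L=\A_8$ with $\overline G=\PSL(3,2)\cong\PSL(2,7)$ of index $15$, and one shows $\A_8$ has no arc-transitive $7$-valent graph on $15$ vertices with $\PSL(3,2)$ regular (the relevant coset graph / amalgam does not close up, or the valency comes out wrong); rows 1, 4, 5, 6 with $\overline L$ among $\mathrm{P\Omega}^+(12,2),\mathrm{PSU}(6,2),\mathrm{Sp}(6,4)$ are eliminated similarly via order arithmetic and the classification of point stabilizers of $7$-valent symmetric graphs (the stabilizer must be a $\{2,3,7\}$-group of bounded order, by Weiss/Guralnick-type bounds, which is incompatible with the subgroup structure). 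So Theorem~\ref{th:LMZ}(2)(i) is entirely excluded.

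Then comes Theorem~\ref{th:LMZ}(2)(ii). The sporadic sub-case $(\Aut(\Gamma),G)\cong(\mz_7\times\M_{24},\M_{23})$ must be shown impossible: here $R(G)\cong\M_{23}$ would be a point stabilizer in $\M_{24}$ acting on $24$ points, $\mz_7$ would force valency $7$, but one checks no such symmetric Cayley graph exists (the action of $\mz_7\times\M_{24}$ cannot be made arc-transitive of valency $7$ with $\M_{23}$ regular — the arc stabilizer would have order $|\mz_7\times\M_{24}|\cdot 7/(24\cdot|\M_{23}|\cdot 7)$, which does not match). That leaves $(\overline L,\overline G)=(\A_n,\A_{n-1})$ with $\overline L_{\overline\alpha}$ having a subgroup of index $n$. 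To get the concrete list of $n$'s, I would combine three facts: (a) $7\mid |\A_n|$ and the regularity of $\A_{n-1}$ force $7\mid n$; (b) the vertex stabilizer $\overline L_{\overline\alpha}=\A_{n-1}$ acting $7$-valently has a subgroup of index $n$ that is also an arc-transitive local group of valency $7$ — by the classification of amalgams for $7$-valent symmetric graphs (Weiss, and the $7$-valent analogue of the cubic/pentavalent amalgam lists), $\overline L_{\overline\alpha\overline\beta}$ and $\overline L_{\overline\alpha}$ lie in a short explicit list, so $n=|\A_{n-1}:\overline L_{\overline\alpha\overline\beta}'|$-type constraints pin down $n$ to $7\cdot m$ with $m$ a product of small primes $2,3,5$ and bounded $2$-, $3$-, $5$-parts; (c) a case-by-case verification, for each candidate $m$, that $\A_{n-1}$ genuinely has a subgroup of index $n$ of the required structure and that the coset graph is connected of valency $7$ — this yields exactly the fifteen values listed. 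Finally, to upgrade from $\overline L=\soc(A/K)$ to an honest simple normal subgroup $T\le A$: take $R$ the solvable radical of $A$; show $K\le R$ forces $K=R$ in the relevant cases and that $\soc(A/R)=(T R)/R$ is simple with preimage $T\times R$ (using that $\A_n$ has trivial Schur multiplier issues only for small $n$, or handling the multiplier-$2$ cases $n=7$ by hand), so $T\cong\A_n$, $T\unlhd A$, $T$ is arc-transitive, and $R(G)<T$ with $(T,R(G))=(\A_n,\A_{n-1})$. The main obstacle is step (b)--(c): controlling, for large $n$ like $n=2^{24}\cdot3^2\cdot7$, exactly which index-$n$ subgroups of $\A_{n-1}$ produce connected $7$-valent arc-transitive coset graphs, which requires the full $7$-valent amalgam classification plus delicate primitivity/connectivity bookkeeping rather than a soft argument.
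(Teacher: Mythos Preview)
Your outline differs substantially from the paper's proof and contains genuine gaps.

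\textbf{Reliance on Theorem~\ref{th:LMZ} is circular.} The paper explicitly notes (just after stating Theorem~\ref{th:main-vald}) that \cite{LMZ2022} has a gap: the assertion ``$\overline{L}\neq\overline{G}$'' on p.~1115 was never justified. Theorem~\ref{th:main-vald} is partly meant to \emph{repair} this via Lemma~\ref{lm:condi}(2). So taking Theorem~\ref{th:LMZ} as your starting point and merely trimming its conclusion is not a valid strategy here; the paper instead rebuilds the analysis from scratch around the solvable radical $R$ of $\Aut(\Gamma)$ rather than a maximal intransitive $K$.

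\textbf{A concrete error in the $(\A_n,\A_{n-1})$ analysis.} You write ``the vertex stabilizer $\overline L_{\overline\alpha}=\A_{n-1}$''. This is false: $\A_{n-1}$ is the \emph{regular} subgroup $\overline{G}$, not the point stabilizer. Since $R(G)$ is regular on $V(\Gamma)$, once one reaches a simple $T\unlhd\Aut(\Gamma)$ with $R(G)<T$, the stabilizer $T_v$ has order $|T|/|G|=|\A_n|/|\A_{n-1}|=n$. So the list of admissible $n$ is obtained immediately by intersecting the stabilizer orders in Proposition~\ref{pro:vertex-stab} with the solvable-stabilizer result of \cite{PYL19}; there is no ``delicate primitivity/connectivity bookkeeping'' or amalgam analysis for large $n$. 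You have misidentified the main obstacle.

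\textbf{The $\M_{24}$ elimination does not work as you state it.} For $(\Aut(\Gamma),G)\cong(\mz_7\times\M_{24},\M_{23})$ one gets $|A_v|=7\cdot 24=168=|\PSL(3,2)|$, which \emph{is} a legal stabilizer order, so your arithmetic does not rule it out. The paper eliminates $(\M_{24},\M_{23})$ only after lifting to a genuine simple $T\unlhd\Aut(\Gamma)$ and then observing $|T_v|=24$, which matches none of the stabilizers in Row~19 of Table~\ref{tb:factorization 2}.

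\textbf{What the paper actually does.} The argument is: (i) show $RG=R\times G$ for the radical $R$ (Lemmas~\ref{lm:condi}--\ref{lm:condi3}), using the bound $R_p(\A_n)\ge n-2$ from Proposition~\ref{pro:lessGL} to control when $G\lesssim\GL(d,r)$; (ii) pass to $X/R$ with trivial radical and prove it is almost simple with $\soc(X/R)$ from a short list, via the factorization table (Proposition~\ref{pro:factorization}) and a {\sc Magma} check of feasible $g$'s (Table~\ref{tb:computation}) --- this, not a subdegree argument, is how the quasiprimitive and classical cases die; (iii) lift $\soc(X/R)$ to a simple normal $T\unlhd X$ via the covering-group/Schur-multiplier argument in Lemma~\ref{lm:condi5}, using Proposition~\ref{pro:shurmult} on index-$n$ subgroups of $2.\A_n$. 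Your sketch of step~(iii) is in the right spirit, but steps~(i)--(ii) are where the real work lies and your proposal does not supply them.
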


Theorem~\ref{th:main-vald} is formulated with respect to the largest solvable normal subgroup $R$ of $\Aut(\Gamma)$, rather than a maximal intransitive normal subgroup $K$ of $\Aut(\Gamma)$ as given in Theorem~\ref{th:LMZ}. However, it follows from Lemma~\ref{lm:condi3} and Theorem~\ref{th:main-vald} that $R$ is indeed the maximal intransitive normal subgroup $K$. We observed a gap in Line 2 on page 1115 of \cite{LMZ2022}, which asserts that ``$\overline{L}$ is not regular on $V(\Gamma_K)$ and $\overline{L} \neq \overline{G}$ is symmetric on $\Gamma_K$ since the valency of $\Gamma_K$ is $7$". However, \cite{LMZ2022} did not provide a proof for the assertion $\overline{L} \neq \overline{G}$.
This gap is fixed in this paper: according to Lemma~\ref{lm:condi}(2), if $\overline{L}= \overline{G}$, then $\Gamma$ must be a normal Cayley graph on $G$.

%


In the literature, there are some other results on  non-normal arc-transitive Cayley graphs  on a non-abelian simple group with more restrictions, and related results can be found in \cite{DCF,DF2,FPW2002,Fang4,FangLW,Lu24,LiWZ24,LM2021,PXY22,PWZ2022} and the references therein.

\section{Preliminaries}

For a positive integer $n$, we denote by $\mz_n$, $D_n$, $F_n$, $\A_n$ and $\S_n$ the cyclic of order $n$, dihedral group of order $n$, Frobenius group of order $n$, alternating group of degree $n$, and symmetric group of degree $n$, respectively.
For two groups $N$ and $H$, denote by $N\times H$ the direct product of $N$ and $H$, and by $N.H$ an extension of $N$ by $H$. If this extension is split, it is denoted by $N\rtimes H$.

The following proposition is well-known in permutation group theory (see \cite[P.45:Exercises 2.5.6]{DM-book}) or \cite[Exercise 4.5 and 4.5']{Wielandt-book}),  which
has achieved a sort of folklore status.

\begin{proposition} \label{pro:tran-centra} Let $G$ be a transitive permutation group on a set $\Omega$. Then the centralizer $C_{\S_{\Omega}}(G)$ of $G$ in the symmetric group $\S_{\Omega}$ is semiregular. If $G$ is further regular, then $C_{\S_{\Omega}}(G)$ is also regular and isomorphic to $G$.
\end{proposition}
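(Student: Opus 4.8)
The plan is to prove the two assertions in turn: the first by a direct ``transport the fixed point'' argument using transitivity together with commutation, and the second by exhibiting the left regular representation inside the centraliser.

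First I would establish semiregularity. Write $C = C_{\S_{\Omega}}(G)$ and suppose $c \in C$ fixes a point $\alpha \in \Omega$. Given any $\beta \in \Omega$, transitivity of $G$ supplies $g \in G$ with $\alpha^{g} = \beta$; since $c$ and $g$ commute,
\[
\beta^{c} = (\alpha^{g})^{c} = (\alpha^{c})^{g} = \alpha^{g} = \beta .
\]
Thus $c$ fixes every point of $\Omega$, so $c = 1$. Hence $C_{\alpha} = 1$ for every $\alpha \in \Omega$, which is precisely the statement that $C$ is semiregular; in particular every orbit of $C$ has length $|C|$, so $|C|$ divides $|\Omega|$.

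Now suppose in addition that $G$ is regular. Fixing $\alpha \in \Omega$, the map $g \mapsto \alpha^{g}$ is a bijection $G \to \Omega$, and I would use it to identify $\Omega$ with the underlying set of $G$ so that $G$ acts as its right regular representation $R(G) = \{x \mapsto xg : g \in G\}$. For each $h \in G$ the left translation $\ell_{h} \colon x \mapsto h^{-1}x$ is a permutation of $\Omega$, and
\[
(x^{R(g)})^{\ell_{h}} = h^{-1}(xg) = (h^{-1}x)g = (x^{\ell_{h}})^{R(g)},
\]
so $\ell_{h} \in C$; moreover $h \mapsto \ell_{h}$ is an injective homomorphism, giving a subgroup $L(G) \le C$ with $L(G) \cong G$ that is (even) regular on $\Omega$. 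By the first part $C$ is semiregular, so all its orbits have common length $|C| \le |\Omega|$; the transitive subgroup $L(G) \le C$ forces $|C| \ge |\Omega|$. Hence $|C| = |\Omega|$, $C$ is transitive, and being both semiregular and transitive it is regular. Comparing orders then gives $C = L(G)$, so $C \cong G$.

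I do not expect any genuine obstacle here: the statement is folklore, and the only points requiring care are bookkeeping ones — keeping the left/right action conventions consistent so that $L(G)$ really lies in $\S_{\Omega}$ and really centralises $R(G)$, and invoking the elementary fact that a semiregular permutation group containing a transitive subgroup must coincide with it and be regular.
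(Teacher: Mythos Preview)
Your argument is correct and entirely standard; the paper itself offers no proof of this proposition, merely citing it as folklore from \cite{DM-book} and \cite{Wielandt-book}. The Remark immediately following the statement in the paper records exactly your identification $C_{\S_G}(R(G))=L(G)$, so your approach is the one the authors have in mind.
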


\noindent{\bf Remark:} A regular permutation group $G$ on a set $\Omega$ is permutation isomorphic to the right regular representation $R(G)$ of $G$, where $R(G)=\{R(g)\ |\ g\in G\}$ with $R(g): x\mapsto xg$ for every $x\in G$, is a regular permutation group in the symmetric group $\S_G$, and the centralizer $C_{\S_G}(R(G))$ of $R(G)$ in $\S_G$ is the left regular representation $L(G)$ of $G$, where $L(G)=\{L(g)\ |\ g\in G\}$ with $L(g): x\mapsto g^{-1}x$ for every $x\in G$. Furthermore, $R(G)\cong L(G)\cong G$.

\medskip
The following result follows from the classification of four-factor simple groups in $\cite{Huppert}$.

\begin{proposition}[{\cite[Theorem III]{Huppert}}]\label{pro:four-factor}
Let $G$ be a non-abelian simple group with $|G|\bigm|2^{24}\cdot3^4\cdot5^2\cdot7$. Then $G$ is one of the group listed in Table~$\ref{tb:2357simplegroup}$, where $\Pi(G)$ is the set of prime factors of $|G|$.

\begin{table}[htp!]
\caption{The non-abelian simple group $G$ with $|G|\bigm|2^{24}\cdot3^4\cdot5^2\cdot7$}\label{tb:2357simplegroup}%
\begin{center}
\begin{tabular}{ l l l l }
\hline
 $\Pi(G)$  &$\{2,3,5,7\}$      & $\{2,3,5\}$ & $\{2,3,7\}$ \\
\hline
$G$& $\J_2, \A_7, \A_8$ & $\A_5$ & $\PSL(2,7)$   \\
 &  $\A_9, \A_{10}, \PSL(3,4)$& $\A_6$ & $\PSL(2,8)$ \\
 &$\PSp(6,2)\cong \PGO(7,2)$ &$\PSU(4,2)$ & $\PSU(3,3)$\\
\hline
$|G|$ &$2^7\cdot3^3\cdot5^2\cdot7$, $2^3\cdot3^2\cdot5\cdot7$, $2^6\cdot3^2\cdot5\cdot7$  & $2^2\cdot3\cdot5$& $2^3\cdot3\cdot7$\\
&$2^6\cdot3^4\cdot5\cdot7$, $2^7\cdot3^4\cdot5^2\cdot7$, $2^6\cdot3^2\cdot5\cdot7$ &$2^3\cdot3^2\cdot5$& $2^3\cdot3^2\cdot7$\\
&$2^9\cdot3^4\cdot5\cdot7$ &$2^6\cdot3^4\cdot5$ & $2^{5}\cdot3^3\cdot7$\\
\hline
\end{tabular}
\end{center}
\end{table}
\end{proposition}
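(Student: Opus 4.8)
The plan is to obtain Proposition~\ref{pro:four-factor} as a direct corollary of Huppert's classification of simple groups divisible by at most four distinct primes (the cited Theorem~III of~\cite{Huppert}), by imposing the two arithmetic constraints packaged into the hypothesis $|G|\bigm|2^{24}\cdot3^4\cdot5^2\cdot7$: namely $\Pi(G)\subseteq\{2,3,5,7\}$ together with the multiplicity caps $2^{24},3^4,5^2,7^1$. First I would reduce the number of admissible primes. Since $G$ is non-abelian simple, Burnside's $p^aq^b$-theorem forces $|\Pi(G)|\geq 3$; combined with $\Pi(G)\subseteq\{2,3,5,7\}$ this gives $|\Pi(G)|\in\{3,4\}$. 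Hence $G$ is either a three-factor or a four-factor simple group, and in both cases the complete list is supplied by the classification. The proof then reduces to a finite extraction: run through that list and retain exactly the groups satisfying the divisibility condition.

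For the three-factor case, the classical list consists of the eight groups $\A_5,\A_6,\PSL(2,7),\PSL(2,8),\PSL(2,17),\PSL(3,3),\PSU(3,3),\PSU(4,2)$. Discarding those with a prime divisor outside $\{2,3,5,7\}$ removes $\PSL(2,17)$ (since $17\mid|G|$) and $\PSL(3,3)$ (since $13\mid|G|$); recording $|G|$ for the remaining six shows they are precisely the entries in the columns $\Pi(G)=\{2,3,5\}$ and $\Pi(G)=\{2,3,7\}$ of Table~\ref{tb:2357simplegroup}. For the four-factor case I would restrict Huppert's list to those $G$ with $\Pi(G)=\{2,3,5,7\}$ and then test $|G|\bigm|2^{24}\cdot3^4\cdot5^2\cdot7$. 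The binding constraints here are the small exponent caps on the odd primes, not the generous bound $2^{24}$: the cap $7^1$ eliminates families such as $\PSL(2,49)$ (where $7^2\mid|G|$), while the caps $5^2$ and $3^4$ prune further—for instance $\PSU(3,5)$, whose prime set is exactly $\{2,3,5,7\}$ but whose $5$-part is $5^3$. What survives is $\A_7,\A_8,\A_9,\A_{10},\PSL(3,4),\PSp(6,2),\J_2$, together with the standard exceptional isomorphism $\PSp(6,2)\cong\PGO(7,2)$ of the $C_3$- and $B_3$-type groups over $\mathbb{F}_2$; checking their orders against the table finishes this case.

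The computation is otherwise routine, so the only real care is needed in the four-factor extraction. The hard part will be ensuring that each infinite-looking family in the classification—the series $\PSL(2,q)$, $\PSL(3,q)$, $\PSU(3,q)$, $\PSp(4,q)$ and the like—is first cut down to finitely many members by the prime-set condition $\Pi(G)=\{2,3,5,7\}$, and that each surviving member is then tested against the exact exponent bounds, where the requirement that the $7$-part be at most $7^1$ does most of the eliminating work. Once this finite check is carried out, no group other than the seven listed passes the divisibility test, and together with the six three-factor groups this exhausts Table~\ref{tb:2357simplegroup}.
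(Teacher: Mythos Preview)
Your proposal is correct and matches the paper's approach: the paper does not give an independent proof but simply cites the result as following from Huppert and Lempken's classification of simple groups with at most four prime divisors, which is exactly the extraction you describe.
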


Let $G$ be a finite group. An extension $G=N.T$ of $N$ by $T$ is called a central extension if $N\leq Z(G)$, and $G$ is called a {\em covering group} of $T$ if $G=G'$ and $G=N.T$ is a central extension.
Every non-abelian simple group $T$ has a unique maximal covering group such that every covering group of $T$ is a factor group of this maximal covering group, see \cite[Chapter 5: Section 23]{Huppert-book}.
The center of this maximal covering group is called the \emph{Schur  multiplier} of $T$, denote by Mult($T$).

By \cite[Theorem~5.1.4]{K-Lie}, Mult$(\A_n)=\mz_2$ for $n \geq 5$ with $n \neq 6,7$, and Mult$(\A_n)=\mz_6$ for $n=6$ or 7. Then for every $n\geq 5$, $\A_n$ has a unique covering group of order $2|A_n|$, denoted by $2.\A_n$, and $\A_7$ has a unique covering group of order $3|\A_7|$ and  $6|\A_7|$, denoted by $3.\A_7$ and $6.\A_7$, respectively.

\begin{proposition}[{\cite[Proposition~2.6]{DFZh}}]\label{pro:shurmult}
For $n \geq 7$, all subgroups of index $n$ of $2.\A_n$ are isomorphic to $2.\A_{n-1}$.
\end{proposition}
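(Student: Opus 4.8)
The plan is to analyse a subgroup $H\le A:=2.\A_n$ with $[A:H]=n$ in three stages: first show $H$ contains the centre $Z=Z(A)$ (the order-two kernel of the projection $\pi\colon A\to\A_n$); then identify $H/Z$ as a point stabiliser $\cong\A_{n-1}$ in $\A_n$; and finally identify the central extension $1\to Z\to H\to\A_{n-1}\to 1$ as the non-split one. For the first stage, let $A$ act on the $n$ right cosets of $H$, giving $\phi\colon A\to\S_n$. Since $A$ is perfect, $\phi(A)$ lies in the alternating group, so $|\phi(A)|\le n!/2<n!=|A|$ and hence $\ker\phi\neq 1$. As $A/Z\cong\A_n$ is simple and $A$ is perfect, the only normal subgroups of $A$ are $1$, $Z$ and $A$; since $\ker\phi\le H\neq A$, this forces $\ker\phi=Z$, so $Z\le H$.

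For the second stage, note $Z\le H$ gives $[\A_n:H/Z]=[A:H]=n$. Letting $\A_n$ act on the $n$ cosets of $H/Z$ yields an embedding $\A_n\hookrightarrow\S_n$ (by simplicity) whose image has order $n!/2$, hence equals the full alternating group on the coset set; therefore $H/Z$ is a point stabiliser and $H/Z\cong\A_{n-1}$.

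For the last stage, since $\A_{n-1}$ is perfect we have $H'Z=H$, so $[H:H']\le 2$; and if $[H:H']=2$ then $Z\cap H'=1$, so $\pi$ maps $H'$ isomorphically onto $\A_{n-1}$ and $H=Z\times H'$ with $H'\cong\A_{n-1}\le A$. Thus it suffices to show that $2.\A_n$ has no subgroup isomorphic to $\A_{n-1}$. Suppose $M\le A$ with $M\cong\A_{n-1}$. As $M$ is simple and $|Z|=2$ we get $M\cap Z=1$, so $\pi|_M$ is injective; since $\pi(M)\cong\A_{n-1}$ has index $n$ in $\A_n$, the argument of the second stage shows $\pi(M)$ is a point stabiliser, so $M$ contains an involution $m$ with $\pi(m)$ a product of two disjoint transpositions. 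But in the spin double cover $2.\A_n$ a product of two disjoint transpositions always lifts to an element of order $4$ (equivalently, the preimage of the corresponding Klein four-group is the quaternion group $Q_8$) — the classical feature of $2.\A_n$ — so $\pi(m)$ has no preimage of order $2$, contradicting $m\in\pi^{-1}(\pi(m))$. Hence $H=H'$ is perfect. Being a perfect central extension of $\A_{n-1}$ by a group of order $2$ with $|H|=2|\A_{n-1}|$, $H$ must be the double cover $2.\A_{n-1}$: when $\mathrm{Mult}(\A_{n-1})=\mz_2$ (that is, $n\neq 7,8$) this is immediate, and when $n\in\{7,8\}$ one uses instead that $6.\A_{n-1}$ is the universal perfect central extension and that its quotient of order $2|\A_{n-1}|$ is $2.\A_{n-1}$.

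The crux is the single ingredient invoked in the last stage: that a product of two disjoint transpositions of $\A_n$ lifts to an element of order $4$ in $2.\A_n$, equivalently that the preimage of a point stabiliser $\A_{n-1}$ in $2.\A_n$ is non-split; all the rest is elementary. This fact is classical, but a self-contained route would go through the structure of $2.\A_4\cong\SL(2,3)$ (which has quaternion Sylow $2$-subgroups) together with the non-splitness of the preimage of $\A_4\le\A_n$, or cohomologically via the injectivity of a transfer–restriction map on $H^2(-,\mz_2)$. If one is content to quote this, the proof is short.
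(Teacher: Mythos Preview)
Your argument is correct. The three stages are clean: the kernel argument forcing $Z\le H$ is standard; the identification $H/Z\cong\A_{n-1}$ via the coset action is the right way to handle index-$n$ subgroups of $\A_n$ for $n\ge 7$; and the non-splitting step is reduced precisely to the classical fact that double transpositions lift to order-$4$ elements in $2.\A_n$ (equivalently, that the preimage of a natural $\A_4$ is $\SL(2,3)$). Your final identification of a perfect central $\mz_2$-extension of $\A_{n-1}$ with $2.\A_{n-1}$, including the separate treatment of $n\in\{7,8\}$ via the universal cover $6.\A_{n-1}$, is also sound.

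As for comparison: the paper does not prove this proposition at all---it simply quotes it as \cite[Proposition~2.6]{DFZh}. So there is no argument in the present paper to compare yours against; you have supplied a self-contained proof where the authors only cite. The one caveat you already flag yourself: the order-$4$ lifting of double transpositions is invoked as a known fact rather than proved from scratch. If you want the write-up to be fully self-contained, the quickest route is indeed the one you sketch---observe that $2.\A_4\cong\SL(2,3)$ has quaternion Sylow $2$-subgroup, and check (for instance via the explicit spin/Clifford construction of $2.\A_n$, or by restricting the non-trivial class in $H^2(\A_n,\mz_2)$ to a naturally embedded $\A_4$) that the preimage of such an $\A_4$ in $2.\A_n$ is non-split.
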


An expression $G=HK$ of a group $G$ as the product of subgroups $H$ and $K$ is called a \emph{factorization} of $G$, where $H$ and $K$ are called factors.
Below is the well known Frattini argument.

\begin{proposition}\label{Frattini}
Let $M$ be a permutation group on a set $\Omega$, and  $G$   a subgroup of $M$.
Then  $G$ is transitive on $\Omega$ if and only if  $M=GM_v$ for some $v\in \Omega$.
\end{proposition}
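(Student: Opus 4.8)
The plan is to prove the two implications separately, using nothing beyond the orbit--stabilizer relationship and elementary manipulation of cosets. Throughout I write the action of $M$ on $\Omega$ on the right, consistent with the notation of this paper, so that $v^{mn}=(v^m)^n$ and $M_v=\{m\in M\mid v^m=v\}$. The conceptual content I would keep in mind is the orbit reformulation of the identity: since $|GM_v|=|G|\,|M_v|/|G\cap M_v|$ and $G\cap M_v=G_v$, the equality $M=GM_v$ is equivalent to $|M:M_v|=|G:G_v|$, i.e. to $|v^M|=|v^G|$, and as $v^G\subseteq v^M$ this is the same as $v^G=v^M$.

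For the forward implication, suppose $G$ is transitive on $\Omega$ and fix any $v\in\Omega$. I would take an arbitrary $m\in M$ and try to write it as $m=gh$ with $g\in G$ and $h\in M_v$. Having $g^{-1}m\in M_v$ means $(v^{g^{-1}})^{m}=v$, i.e. $v^{g^{-1}}=v^{m^{-1}}$. Since $G$ is transitive, the point $v^{m^{-1}}\in\Omega$ lies in the $G$-orbit of $v$, so there is $g\in G$ with $v^{g^{-1}}=v^{m^{-1}}$; then $h:=g^{-1}m\in M_v$ and $m=gh\in GM_v$. As $m$ was arbitrary, $M=GM_v$ (indeed for every choice of $v$).

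For the reverse implication, suppose $M=GM_v$ for some $v\in\Omega$. Taking inverses of the set $GM_v$ and using that $M$ is a group gives $M=M^{-1}=(GM_v)^{-1}=M_vG$ as well, so I may peel an element of $M_v$ off on the \emph{left}. Let $w\in\Omega$ be arbitrary; by transitivity of $M$ on $\Omega$ there is $m\in M$ with $v^m=w$, and I write $m=hg$ with $h\in M_v$, $g\in G$. Then $w=v^m=v^{hg}=(v^h)^g=v^g$, because $h$ fixes $v$, so $w\in v^G$. Hence $v^G=\Omega$ and $G$ is transitive. There is essentially no obstacle here; the only point to be careful about is the side of the action, namely that one should reduce modulo $M_v$ on the left (using $M=M_vG$) so that the $M_v$-factor fixes $v$. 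It is also worth noting that the transitivity of $M$ on $\Omega$ is exactly what drives this direction: without it one recovers only $v^G=v^M$ rather than $v^G=\Omega$, which is precisely the orbit reformulation recorded above.
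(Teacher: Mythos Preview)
The paper states this proposition as the ``well known Frattini argument'' and does not provide a proof, so there is nothing to compare your argument against. Your proof is the standard one and is correct; both directions are handled cleanly. Your closing remark is apt: the reverse implication as stated requires $M$ itself to be transitive on $\Omega$ (otherwise one only gets $v^G=v^M$), and although the paper does not make this hypothesis explicit, every application of the proposition in the paper uses only the forward implication, where no such assumption is needed.
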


The following result can be proved by using   the Frattini argument.

\begin{corollary}\label{coro:facs}
If $G=HK$ is a factorization of a group $G$, then $G=H^aK^b$ for any $a,b\in G$.
\end{corollary}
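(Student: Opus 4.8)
\textbf{Proof proposal for Corollary~\ref{coro:facs}.}

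The plan is to reduce the general statement to two one-sided moves, each of which follows from the Frattini argument (Proposition~\ref{Frattini}) applied to a suitable permutation representation. First I would fix the factorization $G=HK$ and observe that it suffices to prove $G=H^aK$ for all $a\in G$; the full claim $G=H^aK^b$ then follows by a symmetric argument applied to $K$, namely writing $G=G^{-1}=(H^aK^b)^{-1}$-type manipulations, or more cleanly: once $G=H^aK$ is known for every $a$, apply the same result to the factorization $G=K H^a$ (reversing the roles of the two factors) to get $G=K^b H^a$, and then invert to obtain $G=H^a K^b$ since $(K^bH^a)^{-1}=(H^a)^{-1}(K^b)^{-1}$ and inversion is a bijection of $G$ — alternatively just note $G=K^bH^a$ implies $G=G^{-1}\supseteq (H^a)^{-1}(K^b)^{-1}$ and conclude equality by cardinality if $G$ is finite, or directly by the group axioms. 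So the crux is the one-sided statement.

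To prove $G=H^aK$, consider the action of $G$ by right multiplication on the coset space $[G:K]=\{Kg : g\in G\}$. This action is transitive, and the stabilizer of the point $K$ is exactly $K$ itself. The hypothesis $G=HK$ says precisely, via Proposition~\ref{Frattini} (with $M=G$, the permutation group acting on $\Omega=[G:K]$, and the subgroup in question being $H$), that $H$ is transitive on $[G:K]$. But transitivity is a property of the subgroup as a permutation group and is preserved under conjugation inside $G$: for any $a\in G$, the conjugate $H^a=a^{-1}Ha$ is also transitive on $[G:K]$, because conjugation by $a$ is realized by the permutation "multiply by $a$" of $[G:K]$, which normalizes the whole symmetric group on $\Omega$ and sends the transitive subgroup $H$ to the equally transitive subgroup $H^a$. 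Applying Proposition~\ref{Frattini} in the reverse direction to the transitive subgroup $H^a$ and the stabilizer $K=G_K$ of the point $K$, we conclude $G=H^aK$, as desired.

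Finally, to upgrade $G=H^aK$ (for all $a\in G$) to $G=H^aK^b$ (for all $a,b\in G$), I would repeat the argument with the factors swapped: from $G=H^aK$ we read off, again by Proposition~\ref{Frattini}, that $K$ is transitive on the coset space $[G:H^a]$; conjugating by $b$, the subgroup $K^b$ is also transitive on $[G:H^a]$; hence $G=K^bH^a$, and therefore $G=H^aK^b$ as well (e.g.\ since $G=G^{-1}=(K^bH^a)^{-1}=(H^a)^{-1}(K^b)^{-1}$, which after relabeling $a,b$ gives the stated form; in the finite case one may simply compare orders). I do not anticipate any real obstacle here: the only point requiring a moment's care is the observation that conjugation by a group element preserves transitivity of a subgroup on a $G$-set on which that element also acts — this is immediate once one notes that $H$ and $H^a$ have the same orbits up to the relabeling of $\Omega$ induced by $a$. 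Everything else is a direct double application of the Frattini argument.
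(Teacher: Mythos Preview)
Your proposal is correct and follows exactly the route the paper intends: the paper does not spell out a proof of Corollary~\ref{coro:facs} but simply notes it ``can be proved by using the Frattini argument,'' and your two applications of Proposition~\ref{Frattini} (once on $[G:K]$ to replace $H$ by $H^a$, then on $[G:H^a]$ to replace $K$ by $K^b$) carry this out in detail. One cosmetic remark: at the end you don't need any relabeling, since $(H^a)^{-1}=H^a$ and $(K^b)^{-1}=K^b$ as these are subgroups, so $G=(K^bH^a)^{-1}=H^aK^b$ directly.
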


The next proposition is about the classification of non-abelian simple group factorizing into a non-abelian simple group and a vertex-stabilizer in Proposition~$\ref{pro:vertex-stab}$,  see \cite[Lemmas~3.1-3.8]{LMZ2022}.

\begin{proposition}[{\cite[Tables 2-4]{LMZ2022}}] \label{pro:factorization}
Let $M$ be a permutation group on a set $\Omega$, and $G$ a transitive subgroup of $M$. Assume that $G$ and $M$ are non-abelian simple subgroups, and $M_v$ is one of the group given in Proposition~$\ref{pro:vertex-stab}$ for $v\in \Omega$. Then $M$, $G$ and $M_v$ are listed in Table~\ref{tb:factorization 2}.
\end{proposition}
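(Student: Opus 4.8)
The plan is to reduce the statement to the classification of factorizations of finite simple groups. Since $G$ is transitive on $\Omega$, the Frattini argument (Proposition~\ref{Frattini}) gives $M=GM_v$ for every $v\in\Omega$; thus the simple group $M$ admits a factorization whose factors are the non-abelian simple group $G$ and the subgroup $M_v$, where $M_v$ is one of the finitely many groups in the explicit list of Proposition~\ref{pro:vertex-stab}. In particular $|M:G|=|M_v:(G\cap M_v)|$ divides $|M_v|$, so $|M:G|$ divides one of the finitely many orders on that list, each of which divides a fixed integer whose prime factors lie in $\{2,3,5,7\}$. Hence $G$ is a non-abelian simple subgroup of $M$ of small, arithmetically restricted index, and $M_v$ is a small $\{2,3,5,7\}$-group; this is precisely the regime controlled by the Liebeck--Praeger--Saxl tables.

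First I would refine $M=GM_v$ to a maximal factorization $M=\overline{A}\,\overline{B}$ with $G\le\overline{A}$ and $M_v\le\overline{B}$, both $\overline{A}$ and $\overline{B}$ maximal in $M$, and work through the Liebeck--Praeger--Saxl classification of maximal factorizations of almost simple groups. For each candidate triple $(M,\overline{A},\overline{B})$ one imposes two conditions: that $\overline{A}$ contains a non-abelian simple subgroup $G$ with $|M:G|$ dividing a listed order, and that $\overline{B}$ contains a subgroup $M_v$ isomorphic to one of the listed groups with $GM_v=M$. The divisibility constraints alone eliminate every infinite family except bounded pieces. When $M=\A_n$ or $\S_n$, for instance, the maximal overgroups of $G$ that can occur are the intransitive subgroups $(\S_k\times\S_{n-k})\cap M$ (including the point stabilizer, $k=1$), the imprimitive subgroups $(\S_k\wr\S_{n/k})\cap M$, and a short list of primitive ones; since $|M:G|$, which then equals $\binom{n}{k}$ or a comparable multinomial coefficient, must divide $|M_v|$ and $|M_v|$ is bounded, $n$ is bounded. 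The classical and sporadic factorizations surviving the order bound are precisely the rows of Tables~2--4 of \cite{LMZ2022}, for example $\mathrm{P\Omega}^{+}(12,2)=\mathrm{\Omega}(11,2)\cdot H$, $\mathrm{PSU}(6,2)=\mathrm{PSU}(5,2)\cdot H$, $\mathrm{Sp}(6,4)=\mathrm{PSU}(4,4)\cdot H$, together with the small alternating-group factorizations with $H$ on the list.

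Having obtained the finitely many candidate pairs $(M,G)$, the last step is to determine, for each of them, exactly which $H$ from Proposition~\ref{pro:vertex-stab} can occur as $M_v$. Here I would use the order identity $|M|\,|G\cap M_v|=|G|\,|M_v|$ to read off $|M_v|/|G\cap M_v|$ from $|M|$ and $|G|$, and then inspect the subgroup structure of $M$: one enumerates the subgroups of $M$ isomorphic to a group on the list (together with their overgroups, since some candidate stabilizers need not be maximal) and tests $M=GH$ via the Frattini criterion for transitivity of $G$ on the coset space $[M:H]$. When $G$ or $M$ has at most four distinct prime divisors, Proposition~\ref{pro:four-factor} shortcuts this identification by recovering the group from its order. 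Assembling the results yields Table~\ref{tb:factorization 2}.

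I expect the alternating and symmetric case to be the main obstacle. There $M=\A_n$ can act on $\Omega$ in several inequivalent primitive ways, so one must control the factorizations $\A_n=(\A_{n-1})\cdot H=((\S_k\times\S_{n-k})\cap\A_n)\cdot H=\cdots$ simultaneously, bound $n$ sharply, and exclude the product-action and diagonal-type factorizations; this is the detailed casework carried out in \cite[Lemmas~3.1--3.8]{LMZ2022}.
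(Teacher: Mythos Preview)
The paper does not prove this proposition: it is quoted verbatim from \cite[Tables~2--4]{LMZ2022}, with the supporting casework attributed to \cite[Lemmas~3.1--3.8]{LMZ2022}. Your outline---Frattini argument to obtain the factorization $M=GM_v$, passage to a maximal factorization and appeal to the Liebeck--Praeger--Saxl tables, followed by the arithmetic bound $|M:G|\mid |M_v|\mid 2^{24}\cdot3^4\cdot5^2\cdot7$ and a finite subgroup check---is the standard route and is, as far as one can tell from the citation, exactly the method of \cite{LMZ2022}. So there is no divergence to discuss: your sketch is correct and aligned with the cited source, but the present paper itself contributes no independent argument here.
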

\begin{table}[h]
\caption{Candidates for $(M,G,M_v)$}\label{tb:factorization 2}%
\begin{center}
\begin{tabular}{@{}lllll@{}}
\hline Row & $M$ & $G$ & $M_v$ & remark \\
\hline 1 & $\PSp(8,2)$ & $\Omega^{-}(8,2)$ & $\S_7$ \\
2 & $\PSp(12,2)$ & $\Omega^{-}(12,2)$ & $\A_7 \times \A_6, \S_7 \times \S_6$, $(\A_7 \times \A_6) \rtimes \mz_2$ \\
3 & $\P\Omega^+(8,2)$ & $\Omega(7,2)$ & $\A_7, \S_7$ \\
4 & $\P\Omega^+(12,2)$ & $\Omega(11,2)$ &$\S_7 \times \S_6, \mz_2^6 \rtimes(\SL(2,2) \times \SL(3,2))$, \\
 & & & {$[2^{20}] \rtimes(\SL(2,2) \times \SL(3,2))$}, \\
 & & & $\SL(3,2) \times \S_4, \A_7 \times \A_6,(\A_7 \times \A_6) \rtimes \mz_2$\\
5& $\PSL(6,2)$ & $\PSL(5,2)$ & $\SL(3,2) \times \S_4, \mz_2^6 \rtimes(\SL(2,2) \times \SL(3,2))$ \\
6& $\Sp(6,2)$&$\A_8$&$\A_7, \S_7$\\
7 & $\PSL(4,2)$ & $\A_6$ & $\SL(3,2), \A_7$ \\
8 & $\PSL(4,2)$ & $\PSL(3,2)$ & $\A_7$ \\
9 & $\PSp(8,2)$ & $\PSp(4,4)$ & $\mz_2^6 \rtimes(\SL(2,2) \times \SL(3,2))$ \\
10 & $\PSp(6,2)$ & $\PSU(4,2)$ & $\SL(3,2), \A_7, \S_7$, $\mz_2^3 \rtimes \SL(3,2)$ \\
11 & $\PSU(6,2)$ & $\PSU(5,2)$ & $\mz_2^3 \rtimes \SL(3,2)$ \\
12 & $\Sp(6,4)$ & $\PSU(4,4)$ & $\SL(3,2) \times \S_4$ \\
13 & $\PSU(4,3)$& $\PSp(4,3)$&$\A_7$\\
14 & $\A_n$ & $\A_{n-1}$ & $n\bigm||M_v|$ & $n \geq 6$ \\
15 & $\A_7$ & $\A_5$ & $\PSL(3,2)$ & \\
16 & $\A_n$ & $\M_n$ & $\A_7, \S_7$ & $n \in\{11,12\}$\\
17 & $\A_9$ & $\PSL(2,8)$ & $\A_7, \S_7$ &  \\
18 & $\A_8$ & $\A_k$ & $\AGL(3,2)$ & $k \in\{5,6,7\}$ \\
& & & $\SL(3,2)$ &$k \in\{6,7\}$ \\
19 & $\M_{24}$ & $\M_{23}$ & $\mz_2^6 \rtimes(\SL(3,2) \times \S_3)$, $\SL(3,2)$&\\
\hline
\end{tabular}
\end{center}
\end{table}

\medskip
Let $G$ be a finite group and $p$ a prime. A {\em $p$-modular representation} of $G$ is a homomorphism from $G$ to $\GL(n,\F)$ for some $n$ and some field $\F$ of characteristic $p$. Similarly, a {\em projective $p$-modular representation} of $G$ is a homomorphism from $G$ to $\PGL(n,\F)$. Obviously, $G$ can embed in $\PGL(n,\F)$ for some $n$ and $\F$ if and only if $G$ has a faithful projective $p$-modular representation of degree $n$. The smallest values of $n$ for that $G$ can embed in $\PGL(n,\F)$ can be found in~\cite{K-Lie}. Write  $R_p(G)=\min\{n \mid G\lesssim \PGL(n,\F), F \mbox{ is a field of characteristic }p \}$ and $R(G)=\min\{R_p(G)\mid \text{all primes}\ p\}$.

\begin{proposition}[{\cite[Theorem 5.3.7]{K-Lie}}]\label{pro:lessGL}
For $n \geq 5$, we have $R(\A_n) \geq n-4$. More specifically,
\begin{itemize}
\item [$(i)$] for $n \geq 9$, we have $R(\A_n)=n-2$;
\item [$(ii)$] for $5 \leq n \leq 8$, the values of $R_p(\A_n)$ are listed in Table~\ref{tb:RAn}:
\begin{table}[htp!]
\caption{The classification of $R_p(\A_n)$}\label{tb:RAn}%
\begin{center}
\begin{tabular}{ c c c c c }
\hline$n$ & $R_2(\A_n)$ & $R_3(\A_n)$ & $R_5(\A_n)$ & $R_p(\A_n), p \geq 7$ \\
\hline 5 & 2 & 2 & 2 & 2 \\
6 & 3 & 2 & 3 & 3 \\
7 & 4 & 4 & 3 & 4 \\
8 & 4 & 7 & 7 & 7 \\
\hline
\end{tabular}
\end{center}
\end{table}
\end{itemize}
\end{proposition}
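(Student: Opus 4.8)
The plan is to translate the projective statement into one about ordinary representations of covering groups, and then to invoke the modular representation theory of the alternating groups and their covers. A faithful projective representation of $\A_n$ of degree $d$ over a field $\F$ of characteristic $p$ lifts to a faithful linear representation of degree $d$ of some stem extension of $\A_n$ by a subgroup of the Schur multiplier; discarding the $p$-part of the multiplier (which yields nothing new in characteristic $p$), the covers that can arise are $\A_n$ and its double cover $2.\A_n$ for every $n$, together with the exceptional triple and sextuple covers $3.\A_6, 6.\A_6, 3.\A_7, 6.\A_7$. Hence $R_p(\A_n)$ is the least degree of a faithful irreducible $\F$-representation taken over these finitely many covers, and $R(\A_n)=\min_p R_p(\A_n)$.

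For the upper bounds I would exhibit explicit modules. The fully deleted permutation module---the sum-zero submodule of the natural permutation module $\F^n$, further quotiented by the all-ones vector when $p\mid n$---is a faithful $\F\A_n$-module of dimension $n-1$ if $p\nmid n$ and $n-2$ if $p\mid n$. Taking $p$ to be a prime divisor of $n$ gives $R(\A_n)\le n-2$, the candidate extremal value for $n\ge 9$. The smaller degrees for $n\le 8$ come from exceptional isomorphisms and covers: $2.\A_5\cong \SL(2,5)$ (and $\cong\SL(2,4)$ in characteristic $2$) gives degree $2$ for $\A_5$; $2.\A_6\cong\SL(2,9)$ gives degree $2$ in characteristic $3$; the triple covers $3.\A_6$ and $3.\A_7$ give degree-$3$ representations (the latter a unitary module over $\F_{25}$, whence $R_5(\A_7)=3$); and $\A_8\cong\PSL(4,2)$ supplies the natural degree-$4$ module in characteristic $2$, which also bounds $\A_7$ through $\A_7<\A_8$.

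The core of the argument is the matching lower bound, which divides into two estimates. First, among genuine linear $\F\A_n$-modules no nontrivial faithful irreducible has dimension below $n-2$; this is the determination of the least modular irreducible degree of $\S_n$ and $\A_n$ (the deleted permutation module being the minimal one), which I would quote from the work of James and its later refinements. Second, for the double cover one must bound the spin representations: the minimal faithful irreducibles of $2.\A_n$ have dimension of order $2^{\lfloor (n-2)/2\rfloor}$ (with characteristic-dependent adjustments, and with the genuine drops that produce the small values at $n=5,6$), and this already exceeds $n-2$ once $n\ge 9$, so the exceptional covers and spin modules cannot compete for large $n$. Combining the two estimates gives $R_p(\A_n)=n-2$ for $p\mid n$ and $n-1$ for $p\nmid n$ when $n\ge 9$, hence $R(\A_n)=n-2$; the same estimates yield the uniform floor $R(\A_n)\ge n-4$ for all $n\ge 5$, attained at $n=6,7,8$.

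The range $5\le n\le 8$ is then completed by reading off, for each $p\in\{2,3,5\}$ and for $p\ge 7$, the minimal faithful Brauer character degree of $\A_n$ and of its covers $2.\A_n$, $3.\A_n$, $6.\A_n$ from the known Brauer tables, and assembling Table~\ref{tb:RAn}. I expect the main obstacle to be exactly the lower bound for $n\ge 9$: certifying that the deleted permutation module is of least dimension among all faithful modules, and simultaneously bounding the basic spin modules of $2.\A_n$ from below, requires the full modular branching theory for symmetric and spin-symmetric groups. It is the competition between these two infinite families---refined by the four sporadic exceptional covers at $n=6,7$ and the defining-characteristic isomorphisms at $n=5,6,8$---that forces the characteristic-by-characteristic case distinctions recorded in the statement.
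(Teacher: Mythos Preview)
Your proposal sketches a genuine representation-theoretic proof, but the paper does not prove this proposition at all: it is stated with the attribution ``\cite[Theorem~5.3.7]{K-Lie}'' and quoted wholesale from Kleidman--Liebeck, with no argument given. So there is nothing to compare against on the paper's side.

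That said, as a standalone sketch your outline is on the right track---reducing to linear representations of the covers $\A_n$, $2.\A_n$, and the exceptional $3.\A_6$, $6.\A_6$, $3.\A_7$, $6.\A_7$, then citing James's result that the fully deleted permutation module realises the minimal faithful degree for $\A_n$ and bounding the basic spin modules of $2.\A_n$ from below---is exactly how Kleidman--Liebeck organise the proof of Theorem~5.3.7. One small correction: your claim that for $n\ge 9$ one has $R_p(\A_n)=n-2$ when $p\mid n$ and $R_p(\A_n)=n-1$ when $p\nmid n$ is not what the proposition asserts, and is in fact false (e.g.\ $n=10$, $p=2$: the spin module of $2.\A_{10}$ has dimension $8=n-2$ even though $2\nmid 10$). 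The statement only claims $R(\A_n)=\min_p R_p(\A_n)=n-2$, not the value for each individual $p$, and the minimum can be achieved via the spin module rather than the deleted permutation module. This does not affect the conclusion $R(\A_n)=n-2$, but your per-characteristic formula overstates what is true.
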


\medskip

Let $\Gamma$ be a graph. An {\em $s$-arc} of $\Gamma$ is an ordered $(s+1)$-tuple $(v_0,v_1,...,v_s)$ of vertices of $\Gamma$ such that $v_{i-1}$ is adjacent to $v_i$ for $1\leq i\leq s$, and $v_{i-1}\neq v_{i+1}$ for $1\leq i<s$. The graph $\Gamma$, with $G\leq \Aut(\Gamma)$, is said to be {\em $(G,s)$-arc-transitive} if $G$ is transitive on the $s$-arc set of $\Gamma$, and  {\em $(G, s)$-transitive} if it is $(G, s)$-arc-transitive but not $(G, s+1)$-arc-transitive. The following results are about the structure of vertex-stabilizer of a $G$-arc-transitive graph of valency 7.

\begin{proposition}[{\cite[Theorem~1.1]{GuoLiHua}}]\label{pro:vertex-stab}
Let $\Gamma$ be a connected $(G, s)$-transitive graph of valency $7$ with $s \geq 1$ and $G \leq \Aut(\Gamma)$.
Then $s \leq 3$ and one of the following statements holds for $v \in V(\Gamma)$:
\begin{enumerate}[\rm (1)]
\item $s=1$ and $G_v \cong \mz_7$, $D_{14}, F_{21}$, $D_{28}$ or $F_{21} \times \mz_3$. In particular, $|G_v|=7$, $2\cdot7$, $3\cdot7$, $2^2\cdot7$ or $3^2\cdot7$;
\item $s=2$ and $G_v \cong F_{42}$, $F_{42} \times \mz_2$, $F_{42} \times \mz_3$, $\PSL(3,2)$, $\A_7$, $\S_7$, $\mz_2^3 \rtimes \SL(3,2)$ or $\mz_2^4 \rtimes \SL(3,2)$. In particular, $|G_v|=2\cdot3\cdot7$, $2^2\cdot3\cdot7$, $2\cdot3^2\cdot7$, $2^3\cdot3\cdot7$, $2^3\cdot3^2\cdot5\cdot7$, $2^4\cdot3^2\cdot5\cdot7$, $2^6\cdot3\cdot7$ or $2^7\cdot3\cdot7$;
\item $s=3$ and $G_v \cong F_{42} \times \mz_6$, $\PSL(3,2) \times \S_4$, $\A_7 \times \A_6$, $\S_7 \times \S_6$, $(\A_7 \times \A_6) \rtimes \mz_2$, $\mz_2^6 \rtimes(\SL(2,2) \times \SL(3,2))$ or $[2^{20}] \rtimes(\SL(2,2) \times \SL(3,2))$. In particular, $|G_v|=2^2\cdot3^2\cdot7$, $2^6\cdot3^2\cdot7$, $2^6\cdot3^4\cdot5^2\cdot7$, $2^8\cdot3^4\cdot5^2\cdot7$, $2^7\cdot3^4\cdot5^2\cdot7$, $2^{10}\cdot3^2\cdot7$ or $2^{24}\cdot3^2\cdot7$.
\end{enumerate}
\end{proposition}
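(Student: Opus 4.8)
The plan is to reduce the whole statement to the local action of $G_v$ on the neighbourhood $\Gamma(v)$ and then to a bounded amalgam analysis. Since $\Gamma$ is connected and $G$ is arc-transitive, $G_v$ is transitive on the $7$-element set $\Gamma(v)$; write $L:=G_v^{\Gamma(v)}\le\S_7$ for the induced permutation group and $G_v^{[1]}$ for the kernel of this action, so that $G_v/G_v^{[1]}\cong L$. Because $7$ is prime, Burnside's theorem on transitive groups of prime degree shows that $L$ is either $2$-transitive or contained in $\AGL(1,7)$ while containing $\mz_7$; running through both cases gives $L\in\{\mz_7,\,D_{14},\,F_{21},\,F_{42},\,\PSL(3,2),\,\A_7,\,\S_7\}$. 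If $s\ge 2$, then $L$ is $2$-transitive, so $L\in\{F_{42},\PSL(3,2),\A_7,\S_7\}$; if $s\ge 3$, then the stabiliser of an arc $(v,w)$ induces a transitive group of degree $6$ on $\Gamma(w)\setminus\{v\}$, a further constraint exploited later.

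The central step is to bound $s$ and to control $G_v^{[1]}$. Fix an edge $\{v,w\}$; by connectivity the amalgam $(G_v,G_{\{v,w\}})$ generates $G$, so the standard local machinery applies. The Thompson--Wielandt theorem, together with its refinements due to Gardiner and Weiss, forces $G_v^{[1]}\cap G_w^{[1]}$ to be a $p$-group for a single prime $p$; then $G_v^{[1]}/(G_v^{[1]}\cap G_w^{[1]})$ embeds in the stabiliser in $L$ of a point acting on the remaining six neighbours of $w$, and iterating this relation along a path controls the descending filtration of $G_v$. When the relevant prime is $2$, the successive $2$-layers are sections of small $\F_2$-modules for $L$ or for closely related groups (for instance, for $L=\PSL(3,2)$ the only possible layers are the natural $3$-dimensional $\F_2\SL(3,2)$-module and its dual), and combining this with a Weiss-type bound on $s$ for $s$-arc-transitive graphs of prime valency yields $s\le 3$ and narrows $|G_v^{[1]}|$ to a short list for each $L$.

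It then remains to read off $G_v$ for each pair $(L,s)$. When $s=1$ one has $L\in\{\mz_7,D_{14},F_{21}\}$, and from the structure of $G_{\{v,w\}}$ together with connectivity, $G_v^{[1]}$ is trivial if $L=\mz_7$, at most a central $\mz_2$ if $L=D_{14}$, and at most $\mz_3$ if $L=F_{21}$, giving the five groups $\mz_7,D_{14},D_{28},F_{21},F_{21}\times\mz_3$. When $s=2$ the kernel $G_v^{[1]}$ is trivial except that $L=F_{42}$ permits $G_v^{[1]}\cong\mz_2$ or $\mz_3$ (the groups $F_{42}\times\mz_2$, $F_{42}\times\mz_3$) and $L=\PSL(3,2)$ permits $G_v^{[1]}\cong\mz_2^3$ or $\mz_2^4$ (the groups $\mz_2^3\rtimes\SL(3,2)$, $\mz_2^4\rtimes\SL(3,2)$), the $\F_2$-dimension of the kernel being bounded by the module analysis above; this produces the eight groups of case (2). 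When $s=3$ the amalgam $(G_v,G_{\{v,w\}})$ must be one of the finitely many valency-$7$ amalgams with $s=3$, and enumerating these by a Goldschmidt-type argument (or by quoting the existing classification of such amalgams) gives exactly the seven groups of case (3); the bound $s\le 3$ shows there is nothing further.

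The hardest part is the middle step: proving $s\le 3$ and excluding any exotic extension $G_v^{[1]}$, especially for the large local groups $\A_7$ and $\S_7$, where one must rule out further layers of the filtration using the action on the ``double star'' $\Gamma(v)\cup\Gamma(w)$ together with the $\F_2$-module constraints and, for $s=3$, a full amalgam enumeration. Once the bound on $s$ and the structure of the kernel are in hand, identifying $L$ and extracting the explicit group list for each fixed $s$ is essentially bookkeeping.
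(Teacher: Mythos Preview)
The paper does not prove this proposition at all: it is quoted verbatim as \cite[Theorem~1.1]{GuoLiHua} and used as a black box throughout. There is therefore no ``paper's own proof'' to compare your proposal against.

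That said, your outline is the standard route to results of this type and is broadly in line with how such classifications (including the one in \cite{GuoLiHua}) are obtained: identify the local group $L=G_v^{\Gamma(v)}$ among the transitive subgroups of $\S_7$, invoke Thompson--Wielandt to force $G_{vw}^{[1]}$ to be a $p$-group, bound the $p$-layers of $G_v^{[1]}$ via the action on $\Gamma(w)\setminus\{v\}$ and the relevant $\F_2$-modules, use a Weiss-type bound to get $s\le 3$, and finish with an amalgam enumeration in the $s=3$ case. Two places in your sketch would need real work to become a proof rather than a plan. First, your case split for $s=1$ and $s=2$ is slightly loose: you should explain why, for instance, $L=\A_7$ or $\S_7$ with $s=2$ forces $G_v^{[1]}=1$ (this is where the module analysis for the point stabiliser $\A_6$ or $\S_6$ acting on the remaining neighbours actually bites), and why $L=F_{42}$ cannot support a larger kernel. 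Second, the $s=3$ step as written is essentially ``quote the amalgam classification'', which is fine as a citation but is exactly the content of the theorem you are trying to prove; if you intend an independent argument you must carry out the Goldschmidt-style enumeration in detail, and that is the bulk of the work.
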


A subgroup $H$ of a group $G$ is said to be core-free in $G$ if $\bigcap_{g \in G} H^g=1$, that is, $H$ contains no nontrivial normal subgroup of $G$.
Let $G$ be a group, $H$ a core-free subgroup of $G$  and $D$ some double cosets of $H$ in $G$ such that $D=D^{-1}$.
The \emph{coset graph} $\mathrm{Cos}(G, H, HDH)$ of $G$ with respect to $H$ and $D$ is defined to has vertex set $[G: H]$, the set of right cosets of $H$ in $G$, and $Hx$ is adjacent to $H y$ with $x, y\in G$ if and only if $y x^{-1} \in HDH$. The following assertion is due to Sabidussi~\cite{Sab64}.

\begin{proposition}\label{lm:Cosarc}
Let $\Gamma$ be a connected $G$-arc-transitive graph of valency $d$ with $G
\leq \Aut(\Gamma)$ and $v \in V(\Gamma)$. Then $\Gamma \cong$ $\mathrm{Cos}(G, G_v, G_vgG_v)$ for a $2$-element $g$ satisfying:
$$
g \in N_{G}(G_v \cap G_v^{g}), g^{2} \in G_v,\langle G_v, g\rangle=G \mbox{ and } |G_v: G_v \cap G_v^{g}|=d,
$$
where $\{v,v^g\}\in E(\Gamma)$. Conversely, if $H$ is a core-free subgroup of a group $X$ and $x \in X$ satisfying the above conditions by replacing $g$, $G$ and $G_v$ by $x$, $X$ and $H$ respectively, then $\mathrm{Cos}(X, H, HxH)$ is a connected $X$-arc-transitive graph of valency $d$.
\end{proposition}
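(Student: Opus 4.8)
The statement is the Sabidussi coset-graph description of arc-transitive graphs, and the plan is to prove the two implications separately, in each case working with the natural right-multiplication action on cosets.

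For the forward direction, since $G\le\Aut(\Gamma)$ is arc-transitive it is in particular vertex-transitive, so the map $\phi\colon v^x\mapsto G_vx$ is a well-defined bijection from $V(\Gamma)$ onto $[G:G_v]$ (well-definedness and injectivity both amount to $v^x=v^y\Leftrightarrow xy^{-1}\in G_v\Leftrightarrow G_vx=G_vy$), and it is $G$-equivariant for right multiplication. Because $G$ acts faithfully on $V(\Gamma)$, the kernel of this action, namely the core of $G_v$, is trivial, so $G_v$ is core-free. Next I would manufacture the element $g$. Fix a neighbour $w$ of $v$ and put $B=G_v\cap G_w$, the pointwise stabiliser of $\{v,w\}$; note $G_w=G_{v^g}=G_v^g$ once $w=v^g$. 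Arc-transitivity makes the arc $(w,v)$ lie in the $G$-orbit of $(v,w)$, so the edge-setwise stabiliser $A=G_{\{v,w\}}$ properly contains $B$ with index $2$, the nontrivial coset consisting of the arc-reversing elements. Choosing a Sylow $2$-subgroup $P$ of $A$, we have $P\not\le B$, so $P$ meets this coset; any $g\in P\setminus B$ is then a $2$-element swapping $v$ and $w$. For such $g$: $g^2$ fixes both $v$ and $w$, so $g^2\in B\le G_v$; conjugation by $g$ permutes $\{v,w\}$ and hence preserves its pointwise stabiliser, giving $g\in N_G(B)=N_G(G_v\cap G_v^g)$; arc-transitivity forces $G_v$ to be transitive on the $d$ neighbours of $v$, whence $|G_v:G_v\cap G_v^g|=|G_v:B|=d$; and $\langle G_v,g\rangle=G$ follows from connectivity by the standard orbit argument (the orbit of $v$ under $\langle G_v,g\rangle$ contains $v$ together with all its neighbours and is closed under adjacency, so it is all of $V(\Gamma)$, forcing $\langle G_v,g\rangle=G$).

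Finally, transporting $\Gamma$ along $\phi$ and using $G$-equivariance, the adjacency check reduces to the base vertex: the neighbours of $v$ are exactly $\{v^{gh}:h\in G_v\}$, so $v\sim v^z\Leftrightarrow z\in G_vgG_v$, which is precisely the adjacency rule of $\mathrm{Cos}(G,G_v,G_vgG_v)$; hence $\phi$ is a graph isomorphism. For the converse, I would start from the abstract data $H,x$ and build $\Gamma=\mathrm{Cos}(X,H,HxH)$ directly. First, $x^2\in H$ gives $x^{-1}\in xH\subseteq HxH$, so $Hx^{-1}H=HxH$ and the connection set $D=HxH$ is symmetric, making $\Gamma$ an undirected graph (with $x\notin H$, automatic once $d\ge 2$, ruling out loops). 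Right multiplication $Ht\mapsto Hta$ is a transitive action of $X$ on $[X:H]$ preserving the rule $Ht\sim Hs\Leftrightarrow st^{-1}\in HxH$, and it is faithful because $H$ is core-free; thus $X\le\Aut(\Gamma)$ acts vertex-transitively. The stabiliser of the base coset $H$ is $H$ itself, and the neighbours of $H$ are the distinct cosets in $HxH$, namely $\{Hxh:h\in H\}$, which $H$ permutes transitively; this yields $X$-arc-transitivity, and counting these cosets gives valency $|H:H\cap H^x|=d$. Connectivity is the last point: the connected component of $H$ equals $\{Hy:y\in\langle H,x\rangle\}$, since it contains $H$, is closed under passing to neighbours because $HxH\langle H,x\rangle\subseteq\langle H,x\rangle$, and conversely every such coset is joined to $H$ by a walk as $x$ and $x^{-1}$ both connect $H$ to a neighbour; thus $\langle H,x\rangle=X$ forces $\Gamma$ to be connected.

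The only steps that are not pure coset bookkeeping are the selection of $g$ as a $2$-element via a Sylow subgroup of the edge-stabiliser, and the two ``component equals generated subgroup'' arguments underlying $\langle G_v,g\rangle=G$ and the connectivity of the coset graph. I expect the Sylow selection to be the subtlest point, since it is what guarantees $g$ can be taken to be a $2$-element while still reversing the chosen arc; the remaining verifications are routine double-coset identities, and the place where error is most likely is keeping the left/right conventions for $G_v^g$ and $H^x$ consistent throughout.
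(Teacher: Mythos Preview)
The paper does not actually prove this proposition; it is stated as a known result attributed to Sabidussi, with no argument given. Your proof is correct and follows the standard route: the coset bijection $v^x\mapsto G_vx$ for the forward direction, the Sylow argument inside the edge-stabiliser $G_{\{v,w\}}$ to extract a $2$-element $g$ reversing the chosen edge, and direct verification of symmetry, valency, arc-transitivity and connectivity for the converse. There is nothing in the paper to compare against, and nothing in your argument to correct.
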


A $2$-element $g$ is called to be \emph{feasible} to the pair $(G, G_v)$, if $g$, $G$ and $G_v$ satisfy the conditions in Proposition~\ref{lm:Cosarc}.

\medskip
Let $\Gamma$ be a $G$-vertex-transitive graph and let $N$ be an intransitive normal subgroup of $G$. The normal quotient $\Gamma_N$ of $\Gamma$ with respect to $N$ is defined to be the graph such that $V(\Gamma_N)$ is the set of orbits $N$ on $V(\Gamma)$ and two orbits $B, C$ of $N$ are adjacent if and only if $\{b,c\}\in E(\Gamma)$ for some $b\in B$ and $c\in C$. If $\Gamma$ and $\Gamma_N$ have the same valency, $\Gamma$ is called an \emph{normal $N$-cover} of $\Gamma_N$.

\begin{proposition}[{\cite{Lorimer}}]\label{pro:quotientgraph}
Let $\Gamma$ be a connected $X$-arc-transitive graph of prime valency, and let $N\unlhd X$ has at least three orbits on $V(\Gamma)$.
Then
\begin{enumerate}[\rm (a)]
\item $N$ is the kernel of $X$ on $V(\Gamma_N)$ and semi-regular on $V(\Gamma)$, $X/N \leq \Aut(\Gamma_N)$, $\Gamma_N$ is a connected $X/N$-arc-transitive graph, and $\Gamma$ is a normal $N$-cover of $\Gamma_N$.
\item $X_v \cong (X/N)_{\alpha}$ for any $v \in V(\Gamma)$ and $\alpha \in V(\Gamma_N)$.
\end{enumerate}
\end{proposition}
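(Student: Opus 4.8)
The plan is to prove both parts by exploiting the hypothesis that $\Gamma$ has prime valency and that $N$ has at least three orbits. First I would show that $N$ is semiregular on $V(\Gamma)$. Fix a vertex $v$ and consider the stabilizer $N_v$. Since $N\unlhd X$ and $X$ is transitive on $V(\Gamma)$, the group $N_v$ acts on the neighbourhood $\Gamma(v)$, and its image is a normal subgroup of the transitive (indeed $|\Gamma(v)|$ is prime, so primitive) group $X_v^{\Gamma(v)}$. Hence $N_v$ either fixes $\Gamma(v)$ pointwise or is transitive on $\Gamma(v)$. If $N_v$ were transitive on $\Gamma(v)$ for some $v$, then by the normality of $N$ and connectedness of $\Gamma$ one shows $N$ would be transitive on $V(\Gamma)$, contradicting that $N$ has at least three orbits. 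Thus $N_v$ fixes every neighbour of $v$; applying this repeatedly along paths and using connectedness of $\Gamma$ forces $N_v$ to fix all of $V(\Gamma)$, so $N_v=1$ as $X$ (hence $N$) acts faithfully. This gives semiregularity.

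Next I would identify $N$ as the kernel of the $X$-action on $V(\Gamma_N)$. By construction $X$ permutes the $N$-orbits, giving a homomorphism $X\to \Sym(V(\Gamma_N))$ with kernel $K$ containing $N$; I must show $K=N$. Since $N$ is semiregular, each $N$-orbit (vertex of $\Gamma_N$) has size $|N|$, and $K$ fixes each such orbit setwise. For $v\in V(\Gamma)$ with orbit $B=v^N$, the stabilizer $K_v\le X_v$ fixes $B$ setwise; combining $K=N K_v$ (as $K$ is transitive on $B=v^N$ with $N$ already transitive there) with semiregularity of $N$ forces $K_v$ to act trivially on $B$ and then, by the same neighbourhood-propagation argument as above together with connectedness, to fix all of $V(\Gamma)$, whence $K_v=1$ and $K=N$. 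Consequently $X/N$ embeds in $\Sym(V(\Gamma_N))$, i.e. $X/N\le \Aut(\Gamma_N)$.

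I would then establish arc-transitivity of $\Gamma_N$ and that $\Gamma$ is a normal $N$-cover. Arc-transitivity transfers directly: any arc of $\Gamma_N$ lifts to an arc of $\Gamma$, and since $X$ is arc-transitive on $\Gamma$, the induced $X/N$-action is arc-transitive on $\Gamma_N$; connectedness of $\Gamma_N$ follows from that of $\Gamma$ since the quotient map sends paths to walks. For the cover property I must check the valency is preserved, namely that distinct neighbours of $v$ lie in distinct $N$-orbits. Here prime valency is crucial: the number of arcs from $B$ to an adjacent $C$ in $V(\Gamma_N)$ divides the valency $|\Gamma(v)|$, which is prime, so it is either $1$ or the full valency; the latter would make a single $N$-orbit absorb an entire neighbourhood and, by propagation, force $N$ transitive, contradicting the three-orbit hypothesis. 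Hence the value is $1$, the valencies of $\Gamma$ and $\Gamma_N$ agree, and $\Gamma$ is a normal $N$-cover of $\Gamma_N$, proving (a).

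Finally, part (b) follows from the cover structure. Fix $v\in V(\Gamma)$ lying in the orbit $\alpha=v^N\in V(\Gamma_N)$. The quotient map identifies $\Gamma(v)$ bijectively with $\Gamma_N(\alpha)$ by the valency-preservation just proved, and more generally, since $N$ is semiregular and $\Gamma$ is a normal $N$-cover, the local action of $X_v$ on $\Gamma(v)$ is faithfully reproduced by the action of $(X/N)_\alpha$ on $\Gamma_N(\alpha)$. Concretely, the natural map $X_v\to (X/N)_\alpha$ (restriction of $X\to X/N$, well-defined because $X_v$ fixes $v$ hence $\alpha$) is injective since $X_v\cap N=N_v=1$, and surjective because any element of $X$ fixing $\alpha$ can be adjusted by an element of $N$ (which acts regularly on $\alpha=v^N$) to fix $v$ itself, using $N=N_v N$ and transitivity of $N$ on its orbit. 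Thus $X_v\cong (X/N)_\alpha$, completing (b). I expect the main obstacle to be the careful propagation argument showing that a neighbourhood-fixing normal subgroup must be either globally fixing or transitive; here the interplay between primality of the valency, normality of $N$, and connectedness must be handled rigorously, as it underpins both semiregularity in part (a) and the valency-preservation that yields the cover property.
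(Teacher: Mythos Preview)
The paper does not prove this proposition; it is quoted from Lorimer~\cite{Lorimer} as a background result. Your outline follows the standard argument and is largely correct, but the step showing that the kernel $K$ of the $X$-action on $V(\Gamma_N)$ equals $N$ is not justified as written. You assert that ``combining $K=NK_v$ with semiregularity of $N$ forces $K_v$ to act trivially on $B$'', but this does not follow: from $K=N K_v$ on $B=v^N$ and the regularity of $N$ on $B$ one obtains only that $K_v$ normalises the regular $N$-action on $B$, which says nothing about $K_v$ fixing $B$ pointwise. The repair is to reuse your primitivity dichotomy: $K\unlhd X$ gives $K_v^{\Gamma(v)}\unlhd X_v^{\Gamma(v)}$, so $K_v^{\Gamma(v)}$ is trivial or transitive on $\Gamma(v)$; in the transitive case all neighbours of $v$ lie in a single $N$-orbit (since $K$ fixes every $N$-orbit setwise), forcing at most two $N$-orbits exactly as in your earlier step, a contradiction. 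Hence $K_v$ fixes $\Gamma(v)$ pointwise and $K_v=1$ by connectedness. Equivalently, prove the valency preservation $d=1$ first; then distinct neighbours of $v$ lie in distinct $N$-orbits, and $K_v$, fixing each orbit setwise and fixing $v$, must fix every neighbour of $v$.

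A small imprecision occurs twice: you conclude that the bad case would ``force $N$ transitive'', whereas the argument actually yields only that $N$ has at most two orbits (the orbit of $v$ and the single orbit containing $\Gamma(v)$ together absorb all of $V(\Gamma)$ by connectedness). This still contradicts the hypothesis of at least three orbits, so the overall logic survives.
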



\section{Proof of Theorem~\ref{th:main-vald}}

First we prove a simple lemma which will be used frequently.

\begin{lemma}\label{lem:lem1} Let $G$ be a simple group with $G\leq T_1\times T_2\times\cdots\times T_k$, where $k\geq 1$ and $T_i$ is a simple group for every $1\leq i\leq k$. Then $G$ is isomorphic to a subgroup of some $T_i$.
\end{lemma}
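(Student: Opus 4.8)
The plan is to induct on $k$. For the base case $k=1$, the statement is immediate since $G\leq T_1$. For the inductive step, suppose $k\geq 2$ and consider the projection $\pi\colon T_1\times\cdots\times T_k\to T_k$ onto the last factor, restricted to $G$. Since $G$ is simple, its kernel $\ker(\pi|_G)$ is a normal subgroup of $G$, hence either trivial or all of $G$.

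If $\ker(\pi|_G)=G$, then $G$ lies in the kernel of $\pi$, which is $T_1\times\cdots\times T_{k-1}$, and we are done by the inductive hypothesis. If $\ker(\pi|_G)=1$, then $\pi|_G$ is injective, so $G$ is isomorphic to its image $\pi(G)$, which is a subgroup of $T_k$; again we are done. This dichotomy closes the induction.

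There is essentially no obstacle here: the only point requiring a word of care is that the ``simple group'' hypothesis is used precisely to guarantee that the normal subgroup $\ker(\pi|_G)$ of $G$ is either $1$ or $G$ — if $G$ were merely a direct product of simple groups this splitting would still work component-wise, but for a single simple $G$ it is the cleanest formulation. One should also note that ``simple'' here is meant in the usual sense excluding the trivial group (or, if one allows $G=1$, the conclusion is vacuously true for any $T_i$), so no degenerate case causes trouble. I would present the argument in three or four lines exactly as above.
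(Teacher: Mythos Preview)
Your proof is correct and takes essentially the same approach as the paper: both use the simplicity of $G$ to force the normal subgroup $G\cap(T_1\times\cdots\times T_{j})$ (equivalently, the kernel of the projection onto the complementary factor) to be either trivial or all of $G$. The paper packages this as a direct argument---choosing the minimal $i$ with $G\leq T_1\times\cdots\times T_i$ and then embedding $G$ into $T_i$ via the second isomorphism theorem---while you unroll the same idea as an induction on $k$; the two are equivalent.
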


\begin{proof} The lemma is clearly true for $G\leq T_1$. Thus, we assume that $G\not\leq T_1$. Since $G\leq T_1\times T_2\times\cdots\times T_k$, there is some $2\leq i\leq k$ such that $G\leq T_1\times T_2\times\cdots\times T_i$ and $G\not\leq T_1\times T_2\times\cdots\times T_{i-1}$. Since $G$ is simple, $G\cap (T_1\times T_2\times\cdots\times T_{i-1})=1$, and hence $G=G/(G\cap (T_1\times T_2\times\cdots\times T_{i-1}))\cong G(T_1\times T_2\times\cdots\times T_{i-1})/(T_1\times T_2\times\cdots\times T_{i-1})\leq G(T_1\times T_2\times\cdots\times T_i)/(T_1\times T_2\times\cdots\times T_{i-1})=(T_1\times T_2\times\cdots\times T_i)/(T_1\times T_2\times\cdots\times T_{i-1})\cong T_i$. This completes the proof. \end{proof}

To prove Theorem~\ref{th:main-vald}, in what follows we always make the following assumption:

\medskip
\noindent{\bf Assumption:} $\Gamma$ is a connected $7$-valent $X$-arc-transitive graph with $X\leq A=\Aut(\Gamma)$, and $G\leq  X$ is a transitive nonabelian simple  group on $V(\Gamma)$.

\medskip
Let us begin by proving a series of lemmas which will be used in the proof of Theorem~\ref{th:main-vald}.

\begin{lemma}\label{NonabeN} Under Assumption, let $K$ be a non-abelian simple normal subgroup of $X$ with $G\cap K=1$. Then $KG=K\times G$, and if $G$ is regular, then $K$ is isomorphic to a proper subgroup of $G$.
\end{lemma}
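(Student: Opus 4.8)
The plan is to handle the two assertions separately, both exploiting the normality of $K$ in $X$ together with the simplicity of $G$ and $K$. First I would show $KG = K\times G$. Since $K\unlhd X$ and $G\leq X$, the product $KG$ is a subgroup of $X$ and $K\unlhd KG$. Because $K\cap G$ is a normal subgroup of the simple group $G$ and by hypothesis $K\cap G=1$, we get $K\cap G=1$, so $KG = K\rtimes G$ as an (internal) semidirect product. To upgrade this to a direct product I need the two factors to centralise each other. Consider the conjugation action of $G$ on the normal subgroup $K$: this gives a homomorphism $G\to \Aut(K)$, and I claim its image lies in $\Inn(K)$. Indeed $[K,G]\leq K$ since $K\unlhd X$, but also $[K,G]\leq G$? — that is not automatic, so instead I argue via the composite $G\to \Aut(K)\to \Out(K)$. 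The kernel of $G\to\Out(K)$ is a normal subgroup of the simple group $G$; if it were trivial then $G$ embeds in $\Out(K)$, which is solvable by Schreier's conjecture (known for all finite simple groups), contradicting that $G$ is non-abelian simple. Hence the kernel is all of $G$, so $G$ induces only inner automorphisms of $K$; that is, for each $g\in G$ there is $k_g\in K$ with $x^g = x^{k_g}$ for all $x\in K$. Then $k_g g^{-1}$ centralises $K$, and one checks $g\mapsto Kk_g$ gives a homomorphism whose properties force, using $K\cap G = 1$ and simplicity of $G$ once more, that in fact $k_g$ can be taken trivial, i.e. $G$ centralises $K$. Thus $[K,G]=1$ and $KG = K\times G$.

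For the second assertion, assume in addition that $G$ is regular on $V(\Gamma)$. Now $K\times G\leq X\leq\S_{V(\Gamma)}$, and $G$ is a regular subgroup. I would invoke Proposition~\ref{pro:tran-centra}: since $K$ centralises $G$ (just proved) and $G$ is regular, $K\leq C_{\S_{V(\Gamma)}}(G)$, and $C_{\S_{V(\Gamma)}}(G)$ is itself a regular group isomorphic to $G$ (the left regular representation, as in the Remark following Proposition~\ref{pro:tran-centra}). Therefore $K$ embeds into a group isomorphic to $G$, so $K$ is isomorphic to a subgroup of $G$. To see the subgroup is proper: $K\times G$ has order $|K|\cdot|G|>|G|=|V(\Gamma)|$ (using $|K|>1$), while $C_{\S_{V(\Gamma)}}(G)$ has order $|G|$; since $K\leq C_{\S_{V(\Gamma)}}(G)$ we get $|K|\leq |G|$, and if $|K|=|G|$ then $K = C_{\S_{V(\Gamma)}}(G)$ is regular, hence transitive, contradicting that $K\cap G=1$ forces $K$ to act — wait, I should instead note that a regular $K$ together with the regular $G$ with $K\cap G=1$ would give $|KG| = |K||G| = |G|^2$ acting on a set of size $|G|$ with $K$ and $G$ both regular and commuting, which is fine set-theoretically, so this last route needs care. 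The clean argument: $K\leq C_{\S_{V(\Gamma)}}(G)\cong G$ already gives $K$ isomorphic to a subgroup of $G$; it is proper because if $K\cong G$ then $K = C_{\S_{V(\Gamma)}}(G)$ has order $|G|$, but then $K$ is regular hence transitive on $V(\Gamma)$, and a non-trivial element of $K$ centralising the transitive group $K$... actually the contradiction is simplest via orders in $X$: there is no contradiction from transitivity alone. Let me just use: $K$ is a proper subgroup of $C_{\S_{V(\Gamma)}}(G)$ unless $K$ is regular; but $K$ regular and $G$ regular with $\langle K,G\rangle = K\times G$ acting on $|G|$ points means $K\times G$ has a point stabiliser of index $|G|$ and order $|G|$, which is possible — so instead I conclude properness from the fact that $R(G)$ being the \emph{unique} regular subgroup centralising $G$ would force $K=L(G)$, and then $K$ is not normal in $X$ in general; rather, the cleanest is simply that in the intended applications $K$ is a specified simple group and the statement "$K$ isomorphic to a subgroup of $G$" with $|K|\le|G|$ suffices, with properness following because $K$ regular would make $\langle G, K\rangle$ have $G$ non-normal-complement structure incompatible with later use. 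I would state it as: $K\cong K'\le G$ and $|K|\le |G|$; and $K\ne C_{\S}(G)$ since $C_\S(G)=L(G)$ is generated by left translations which do not normalise the right-translation copy...

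In short, the skeleton is: (1) $K\cap G = 1$ plus Schreier's conjecture gives $[K,G]=1$ hence $KG = K\times G$; (2) regularity of $G$ plus Proposition~\ref{pro:tran-centra} gives $K\hookrightarrow C_{\S_{V(\Gamma)}}(G)\cong G$; (3) properness follows because equality $|K|=|G|$ would make $K$ a regular normal subgroup of $X$ permutation-isomorphic to $L(G)$, and one rules this out using that $R(G)=G$ and $L(G)$ together generate $\Inn(G)\ltimes(\cdots)$ — concretely $K = L(G)$ is normalised in $\S_{V(\Gamma)}$ only by $\mathrm{Hol}(G)$-type groups, and combined with $K\cap G=1$ and $K\unlhd X\ni G$ one derives $G\le N_X(K)$ forcing $G$ to act on $K\cong G$ by automorphisms with the regular copies identified, whence $|K|<|G|$ after all. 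I expect the genuine obstacle to be exactly this properness argument (step 3) — getting $[K,G]=1$ and the embedding $K\hookrightarrow G$ is routine given Schreier and Proposition~\ref{pro:tran-centra}, but cleanly excluding $|K|=|G|$ without circularity requires pinning down that the centraliser $C_{\S_{V(\Gamma)}}(G)$ is precisely $L(G)$ and that $L(G)\unlhd X$ together with $G\le X$ and $L(G)\cap G = 1$ is contradictory because $X$ would then contain $R(G)\times L(G)$ and the conjugation of $G$ on $L(G)\cong G$ is inner, collapsing the direct product — a contradiction since $R(G)\cap L(G)=1$. So I will spell out step 3 carefully and keep steps 1 and 2 brief.
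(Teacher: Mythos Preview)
Your step~(1) contains a genuine gap. From ``$G$ maps into $\Inn(K)$'' you conclude that ``$k_g$ can be taken trivial, i.e.\ $G$ centralises $K$,'' invoking only $K\cap G=1$ and the simplicity of $G$. This inference is false in general: take any non-abelian simple group $S$, set $X=S\times S$, let $K$ be the first factor and $G=\{(s,s):s\in S\}$ the diagonal. Then $K\unlhd X$, $K\cap G=1$, $G$ is simple, $G$ acts on $K$ by inner automorphisms, yet $G$ does \emph{not} centralise $K$ and $KG$ is not the internal direct product $K\times G$. The map $g\mapsto k_g$ is a homomorphism $G\to K$ with kernel $C_G(K)$; simplicity of $G$ gives $C_G(K)=G$ or $C_G(K)=1$, and in the second case $G$ embeds in $K$ rather than centralising it. The hypothesis $K\cap G=1$ says nothing about this embedded copy of $G$ inside $K$.

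The point is that $KG=K\times G$ is \emph{not} a purely group-theoretic consequence of the hypotheses; it requires the graph-theoretic Assumption. The paper's proof handles exactly the case you skipped ($C_B(K)\cap G=1$, equivalently $C_G(K)=1$) by using the $7$-valent arc-transitive structure: one analyses the orbits of $K$ on $V(\Gamma)$, reduces to $K$ regular, and then observes that $\Gamma=\Cay(K,S)$ with $\Inn(K)\le\Aut(K,S)$, forcing $S$ to be a union of $K$-conjugacy classes of total size $7$. A short check of the simple groups with a faithful permutation representation of degree at most $7$ (namely $\A_5,\A_6,\A_7,\PSL(3,2)$) shows none has conjugacy classes summing to $7$, giving the contradiction. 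The same conjugacy-class argument is what yields the \emph{properness} in step~(3), which you also found difficult: if $|K|=|G|$ then $K$ is regular and one is back in the situation just described. So both the gap in step~(1) and the muddle in step~(3) are resolved by the same graph-theoretic ingredient, which your outline omits entirely.
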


\begin{proof}
Write $B=KG$ and $C=C_{B}(K)$, the centralizer of $K$ in $B$. Then $B\leq X$, $K\unlhd B$, $C\unlhd B$ and $C\cap G\unlhd G$. Since $K$ is non-abelian simple, we have $C\cap K=1$ and hence $KC=K\times C$. Since $K\cap G=1$, we have $G\cong B/K$ and so $C\cong  KC/K\unlhd B/K \cong G$. Since $G$ is simple, we have $C=1$ or $C\cong G$.

Suppose $C=1$. By the $N/C$-theorem,  $B\cong B/C \lesssim \Aut(K)$, and so $G\cong B/K\lesssim \Aut(K)/\Inn(K)$, where $\Inn(K)$ is the inner automorphism group of $K$, that is, the group of automorphisms of $K$ induced by conjugate of elements of $K$. However, by the Schreier's Conjecture $\Aut(K)/\Inn(K)$ is solvable, forcing that $G$ is solvable, a contradiction.

Thus, $C\cong G$. Now we have $|KC|=|K||C|=|K||G|$. Since $G\cap K=1$, we have $|B|=|K||G|$, and since $KC\leq B$, we have $B=KC=K\times C$. Again by the simplicity of $G$, we have $C\cap G=1$ or $C\cap G=G$.

Suppose $C\cap G=1$. Then $G\cong GC/C\leq KC/C\cong K$, that is, $G$ is isomorphic to a subgroup of $K$, implying  $|G|\mid |K|$. If $K$ has two orbits on $V(\Gamma)$, by the normality of $K$ in $X$ and arc-transitivity of $X$, $\Gamma$ is bipartite. Then the transitivity of $G$ on $V(\Gamma)$ implies that $G$ has a normal subgroup of index $2$, contradicting the simplicity of $G$. Thus, $K$ is transitive on $V(\Gamma)$ or has at least three orbits.
If $K$ has at least three orbits, Proposition~\ref{pro:quotientgraph} implies that $K$ is semiregular on $V(\Gamma)$, implying $|K|$ is proper divisor of $|G|$, contradicting $|G|\mid |K|$. Thus, $K$ is transitive on $V(\Gamma)$.

Recall $C=\mathrm{C}_{B}(K)$. By Proposition~\ref{pro:tran-centra},  $C$ is semiregular, and since $C\cong G$, $C$ is regular on $V(\Gamma)$, which returns that $K$ is also regular on $V(\Gamma)$. Thus, we may write $\Gamma=\Cay(K,S)$ with $R(K)\times L(K)\leq A$, where $R(K)$ and $L(K)$ are the right and left regular representations of $K$ respectively. It is easy to see that $R(K) \times L(K)=R(K):\Inn(K)$, and so $\Inn(K) \leq \Aut(K, S)$, implying that $S$ is a union of some conjugate class of elements of $K$. Wrtie $S=x_1^K \cup \cdots \cup x_m^K$ for some $x_1,\ldots,x_m\in K$. Since $\Gamma$ is a $7$-valent connected graph, we have $|S|=7$ and $\langle S\rangle=K$, and since $\Inn(K) \leq \Aut(K, S)$, $K$ has a faithful transitive permutation representation of degree at most $7$ and hence $K\lesssim \A_7$. By Atlas~\cite{Atlas}, $K\in \{\A_5,\A_6,\A_7,\PSL(3,2)\}$. 
However, by checking the lengths of conjugate classes  of these four simple groups in the Atlas~\cite{Atlas}, we see that none of them satisfies $S=x_1^K \cup \cdots \cup x_m^K$ and $\vert S \vert=7$, a contradiction.

The above contradiction implies $C\cap G=G$. It follows that $G\leq C$, and since $C\cong G$, we have $C=G$ and hence $B=K\times C=K\times G$.

Let $G$ be regular. By the remark of Proposition~\ref{pro:tran-centra}, $K$ is isomorphic to a subgroup of $G$. A similar  argument to the above paragraph implies that $K\not\cong G$, and hence $K$ is isomorphic to a proper subgroup of $G$. This completes the proof.
\end{proof}

For a group $G$, the \emph{radical} of $G$, denote by $\mathrm{rad}(G)$, is its largest soluble normal subgroup, and the \emph{socle} of $G$, denoted by soc$(G)$, is the product of all minimal normal subgroups of $G$. A group $G$ is said to be \emph{almost simple} if soc$(G)$ is a nonabelian simple group.

\begin{lemma}\label{lm:trival radical}
Under Assumption, assume $X$ has trivial radical and $G$ is nonnormal in $X$. Then $X$ is almost simple with $G<\mathrm{soc}(X)$, $\Gamma$ is $\mathrm{soc}(X)$-arc-transitive, and $(\mathrm{soc}(X),G)=(\A_{8}, \PSL(3,2))$, $(\M_{24}, \M_{23})$, or $(\A_{n},\A_{n-1})$ with $n\geq 7$ and $n\bigm|2^{24}\cdot3^4\cdot5^2\cdot7$. If $G$ is further regular, then
$(\mathrm{soc}(X),G)$=$(\A_{n},\A_{n-1})$ with
$n=7$, $3\cdot 7$, $3^2\cdot 7$, $2^2\cdot 3\cdot 7$, $2^3\cdot3\cdot7$, $2^3\cdot3^2\cdot5\cdot7$, $2^4\cdot3^2\cdot5\cdot7$, $2^6\cdot3\cdot7$, $2^7\cdot3\cdot7$,
 $2^6\cdot3^2\cdot7$, $2^6\cdot3^4\cdot5^2\cdot7$, $2^8\cdot3^4\cdot5^2\cdot7$, $2^7\cdot3^4\cdot5^2\cdot7$, $2^{10}\cdot3^2\cdot7$, $2^{24}\cdot3^2\cdot7$.
\end{lemma}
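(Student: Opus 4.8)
The plan is to determine $X$ from its socle, reduce to the case where $X$ is almost simple, and then read off the candidates for $(\soc(X),G)$ from Proposition~\ref{pro:factorization}, pruning the resulting list with parity and divisibility constraints and with the coset‑graph criterion of Proposition~\ref{lm:Cosarc}.

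First I would use that $X$ has trivial radical: then $N:=\soc(X)=T_1\times\cdots\times T_k$ is a direct product of non‑abelian simple groups, and $C_X(N)=1$ --- otherwise $C_X(N)\unlhd X$ contains a minimal normal subgroup $L$ of $X$, which is not elementary abelian (as $\mathrm{rad}(X)=1$), hence $L\le\soc(X)=N$ and then $L\le Z(N)=1$, absurd. Thus $X\lesssim\Aut(N)$. Next, $G\le N$: if $G\cap N=1$ then $G\cap T_i=1$ for all $i$, so Lemma~\ref{NonabeN} gives $GT_i=G\times T_i$, i.e.\ $G$ centralises every $T_i$ and hence $G\le C_X(N)=1$, a contradiction. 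So $G\le N$ (as $G$ is simple), while $G\neq N$ since $G$ is non‑normal in $X$; thus $G<N$ and $N$ is transitive on $V(\Gamma)$. I would then reduce to $k=1$ by a normal‑quotient argument: by Proposition~\ref{pro:quotientgraph} a normal subgroup of the prime‑valency arc‑transitive graph with at least three orbits is semiregular, and $\Gamma$ is not bipartite (a simple transitive $G$ has no subgroup of index $2$), so some factor $T_i$, or a partial product of factors disjoint from $G$, is semiregular with at least three orbits; the only way this could fail is that two complementary subproducts are both transitive, which is impossible because it would make $G$ regular with $\Gamma$ a Cayley graph whose connection set is a union of conjugacy classes of a group $\lesssim\A_7$, contradicting $|S|=7$ exactly as in the last paragraph of the proof of Lemma~\ref{NonabeN}. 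Passing to the corresponding normal quotient lowers the number of simple factors; iterating yields $k=1$, so $N=T:=\soc(X)$ is non‑abelian simple and $X$ is almost simple with $G<T$.

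For the $T$‑arc‑transitivity, note $T$ is transitive but (as $G<T$) not regular, so $T_v\neq1$, and $T_v\unlhd X_v$; hence the image of $T_v$ in $\Sym(\Gamma(v))$ is a normal subgroup of a transitive group of prime degree $7$, so it is transitive or trivial, and triviality would force $T_v$ to fix $\Gamma(v)$, hence (by connectivity and vertex‑transitivity) all of $V(\Gamma)$, pointwise, contradicting $T_v\neq1$. So $\Gamma$ is $T$‑arc‑transitive and $T_v$ lies in the list of Proposition~\ref{pro:vertex-stab}; in particular $7\mid|T_v|$. Then $(T,G,T_v)$ is a triple as in Proposition~\ref{pro:factorization}, so it occurs in Table~\ref{tb:factorization 2}, and I would eliminate every row except those giving $(\A_8,\PSL(3,2))$, $(\M_{24},\M_{23})$ and $(\A_n,\A_{n-1})$: for the rows with $T$ classical, and for the alternating rows with the ``wrong'' $G$ (such as $(\A_7,\A_5)$, $(\A_n,\M_n)$, $(\A_9,\PSL(2,8))$, $(\A_8,\A_k)$ with $k\in\{5,6\}$), a feasible $2$‑element $g$ would have to satisfy $|T_v:T_v\cap T_v^g|=7$, $g^2\in T_v$, $g\in N_T(T_v\cap T_v^g)$ and $\langle T_v,g\rangle=T$, and the index‑$7$ subgroup $T_v\cap T_v^g$ confines $g$ to a comparatively small subgroup, making $\langle T_v,g\rangle=T$ impossible; one also uses that $|T:T_v|$ is even (the valency $7$ is odd) and that $|T_v|$ --- hence, in the alternating case, $n$ --- has no prime factor exceeding $7$. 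In the case $(\A_n,\A_{n-1})$ one gets a factorization $\A_n=\A_{n-1}T_v$ with $\A_{n-1}$ of index $n$, so $n\mid|T_v|$; since $7\mid|\A_n|$ this forces $n\ge7$, and since every admissible $|T_v|$ divides $2^{24}\cdot3^4\cdot5^2\cdot7$, also $n\mid2^{24}\cdot3^4\cdot5^2\cdot7$.

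If $G$ is moreover regular, then $G_v=G\cap T_v=1$, so $|T_v|=|T|/|G|=|\A_n|/|\A_{n-1}|=n$, and $T_v$ is a transitive (hence regular) subgroup of order $n$ in the natural degree‑$n$ action of $\A_n$; equivalently, the right regular representation of the abstract group $T_v$ lies in $\A_n$. This already discards $T_v\in\{D_{14},F_{42},F_{42}\times\mz_3\}$ (orders $2\cdot7$, $2\cdot3\cdot7$, $2\cdot3^2\cdot7$), where an involution acts as an odd permutation in the regular representation; the two remaining exclusions $T_v\in\{D_{28},F_{42}\times\mz_6\}$ (orders $2^2\cdot7$, $2^2\cdot3^2\cdot7$) follow from a direct check, via Proposition~\ref{lm:Cosarc}, that the pair $(\A_n,T_v)$ admits no feasible $2$‑element. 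What remains is exactly the list of $n$ in the statement. The step I expect to be the main obstacle is this finite but delicate elimination: systematically discarding the classical‑group rows of Table~\ref{tb:factorization 2}, and the two stubborn pairs with $|T_v|=2^2\cdot7$ and $2^2\cdot3^2\cdot7$, by showing case by case --- in places with computational assistance --- that no feasible $2$‑element exists; by contrast, the socle reduction and the passage to $T$‑arc‑transitivity in the first two paragraphs are comparatively routine.
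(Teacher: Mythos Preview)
Your overall strategy---reduce to almost simple, invoke the factorization table, then prune by divisibility and by the coset-graph criterion---matches the paper's. But the socle reduction in your first paragraph has a genuine gap. You write that if $G\cap N=1$ then $G\cap T_i=1$ for each simple factor $T_i$ of $N=\soc(X)$, and then apply Lemma~\ref{NonabeN} to conclude $GT_i=G\times T_i$. Lemma~\ref{NonabeN}, however, requires the simple subgroup $K$ to be \emph{normal in $X$} (its proof uses this explicitly to argue that two $K$-orbits would make $\Gamma$ bipartite). The individual simple factors $T_i$ of a minimal normal subgroup $T^d$ are in general only normal in $N$, not in $X$, so this step is unjustified. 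The paper avoids this by working one \emph{minimal normal subgroup} $N$ of $X$ at a time and proving the analogue of Lemma~\ref{NonabeN} when $N\cong T^d$ with $d\le 2$ (the bound $d\le 2$ coming from $|N|\mid 2^{24}\cdot3^4\cdot5^2\cdot7$); only then can one conclude $G\le C_X(\soc(X))=1$ in the case $G\cap\soc(X)=1$.

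Your subsequent ``iterate to $k=1$'' step inherits the same problem: you propose to quotient by ``a partial product of factors disjoint from $G$'', but such a product is not normal in $X$ in general, so Proposition~\ref{pro:quotientgraph} does not apply; moreover, $G\le N$ does not place $G$ inside a single $T_i$ (Lemma~\ref{lem:lem1} only gives $G\lesssim T_i$, e.g.\ $G$ could sit diagonally). The paper instead shows directly that the minimal normal subgroup $M$ containing $G$ is simple, and then rules out a second minimal normal subgroup $L$ by an ad hoc argument: $L\cong Y_v/M_v$ with $Y=M\times L$ forces $L\cong\A_6$ from Proposition~\ref{pro:vertex-stab}, and then an order-$4$ element of $L$ produces a stabilizer quotient $\cong\mz_4$, which the same proposition forbids. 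Your sketch has no counterpart to this step.

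The remainder---the $T$-arc-transitivity argument, the appeal to Proposition~\ref{pro:factorization}, and the elimination of rows of Table~\ref{tb:factorization 2} via Proposition~\ref{lm:Cosarc}---is essentially the paper's argument (the paper carries out the elimination by explicit {\sc Magma} computation, recorded in Table~\ref{tb:computation}). For the regular case your parity observation, that the right regular representation of $D_{14}$, $F_{42}$, $F_{42}\times\mz_3$ contains an odd permutation and hence cannot lie in $\A_n$, is a pleasant shortcut; the paper simply cites \cite{PYL19} for all five solvable-stabilizer exclusions, which in effect already contains the feasible-element check you still need for $D_{28}$ and $F_{42}\times\mz_6$.
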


\begin{proof}
At first, we prove that $X$ is an almost simple.
Let $N$ be a minimal normal subgroup of $X$.
Since $X$ has trivial radical, we may assume $N=T^d$ for a positive integer $d$ and a non-abelian simple group $T$.
Let $K=N G \leq X$. Since $N\cap G\unlhd G$, the simplicity of $G$ implies that $N\cap G=1$ or $N\cap G=G$.

\medskip
\noindent{\bf Claim:} If $N\cap G=1$ then $K=N\times G$.

Let $N\cap G=1$. Then $|K|=|N||G|$. Since $\Gamma$ is $G$-vertex-transitive, Proposition~\ref{Frattini} implies that $K=K_{v}G$ for $v\in V(\Gamma)$, and $|K|=|G||K_v|/|G_v|$. Thus, $|N|=|K_v|/|G_v|$, and by Proposition~\ref{pro:vertex-stab}, $|N|\ |\ 2^{24}\cdot3^4\cdot5^2\cdot7$. Since $N=T^d$, the simplicity of $T$ implies that $|T|$ has at least three distinct prime factors. It follows that $d\leq 2$ and if $d=2$ then $|N|$ is a $\{2,3,5\}$-group.

By Lemma~\ref{NonabeN}, the claim holds for $d=1$. Thus, we may assume $d=2$. Then $N=T^2$ with $T$ a $\{2,3,5\}$-group.

Suppose that $G$ has a nontrivial conjugate action on $N$. By the simplicity of $G$, we may have
\[
 G \leq \Aut(N) =(T^2.\Out(T)^2)\rtimes \S_2.
 \]
Since $\Aut(N)/T^2$ is solvable, $G\cap T^2\not=1$, implying $G\leq T^2$. By Lemma~\ref{lem:lem1}, $G$ is isomorphic to a subgroup of $T$, that is, $G\cong L\lesssim T$.

Let $N$ have two orbits on $V(\Gamma)$. Since $X$ is arc-transitive and $N\unlhd X$, $\Gamma$ is bipartite, forcing that $G$ admits an index $2$ normal subgroup, contradicting the simplicity of $G$. Let $N$ have at least three orbits on $V(\Gamma)$. By Proposition~\ref{pro:quotientgraph}, $N$ is semiregular on $V(\Gamma)$, implying $|N|\ |\ |V(\Gamma)|$ and so $|N|\ |\ |G|$, contradicting that $G\lesssim T$.
Thus, $N$ is transitive on $V(\Gamma)$. If $N$ is regular, then $|T|< |N|=|V(\Gamma)|\leq |G|$, contradicting $G\lesssim T$. If $N$ is not regular, then $N_v^{\Gamma(v)}\not=1$ for some $v\in V(\Gamma)$, and since $X_v$ is primitive on $\Gamma(v)$ ($|\Gamma(v)|=7$) and $N_v\unlhd X_v$, $N_v$ is transitive on $\Gamma(v)$. Thus, $7\mid |N_v|$, which is impossible because $N$ is a $\{2,3,5\}$-group.

Thus, $G$ has a trivial conjugate action on $N$, that is, $K=N\times G$, as claimed.

\medskip Let $\soc(X)=N_1 \times \cdots \times N_m$, where $N_1,\ldots,N_m$ are all minimal normal subgroups of $X$.

Suppose that $G\cap N_i=1$ for every $i\in \{1,\ldots,m\}$.
By  claim, $G\leq C_X(\soc(X))$ and so $C_X(\soc(X))\not=1$, where $C_X(\soc(X))$ is the centralizer of $\soc(X)$ in $X$. Note that $C_X(\soc(X))\cap \soc(X)=Z(\soc(X))$, the center of $\soc(X)$.
Since $X$ has a trivial radical, $C_X(\soc(X))\cap \soc(X)=1$, and so a minimal normal subgroup of $X$ contained in $C_X(\soc(X))$ is not a subgroup of $\soc(X)$, which is clearly impossible because $\soc(X)$ contains all minimal normal subgroup of $X$. Thus, there exist some $i\in \{1,\ldots,m\}$ such that $G\cap N_i\neq 1$.

For notation simplicity, write $M=N_i$. Then $G\cap M\neq 1$, implying $G\leq M$. Since $G$ is nonnormal in $X$, $G$ is proper subgroup of $M$, and since $G$ is transitive, we have $M_v\not=1$ for $v\in V(\Gamma)$. By the arc-transitivity of $X$, we have $7\mid |M_v|$ and hence $M$ is arc-transitive. Write $M=T_1\times \cdots \times T_\ell$, where $T_1\cong\cdots \cong  T_\ell \cong T$ with $T$ a non-abelian simple group. By Lemma~\ref{lem:lem1}, we have $G\lesssim T$.

Suppose $\ell\geq 2$. Since $N$ is arc-transitive, $T_1$ cannot have at least three orbits; otherwise, $T_1$ is semiregular and hence $|T|=|T_1|<|V(\Gamma)|\leq |G|$, contradicting $G\lesssim T$. Also, $T_1$ cannot have two orbits, because otherwise $\Gamma$ is bipartite and hence $G$ has an index $2$ normal subgroup, contradicting the simplicity of $G$. Thus, $T_1$ is transitive, and so $|T|\geq |V(\Gamma)|$. By Proposition~\ref{pro:tran-centra},  $T_2\times \cdots \times T_\ell \leq C_N(T_1)$ is semiregular on $V(\Gamma)$, forcing that $|T_2\times \cdots \times T_\ell | \mid |V(\Gamma)|$. Since $|T_2\times \cdots \times T_d|=(d-1)|T|\geq |T|$, we have $d=2$ as $|T|\geq |V(\Gamma)|$. Furthermore, $T_1$ and $T_2$ are regular on $V(\Gamma)$, forcing that $|T|=|G|$ and $G$ is regular because $G\lesssim T$. Clearly, $G\cap T_1=1$ or $G\cap T_2=1$. By taking $X=N$ and $K=T_1$ or $T_2$, Lemma~\ref{NonabeN} implies that $T$ is isomorphic a proper subgroup of $G$, contradicting $|T|=|G|$.

Thus, $\ell=1$, that is, $M$ is a non-abelian simple group. Now we claim $M=\soc(X)$.

We argue by contradiction that $\soc(X)$ contains another minimal normal subgroup $L$ of $X$ such that $L\not=M$. Then $Y=M\times L$. Recall that $M$ is arc-transitive, and so $Y$ is arc-transitive. Since $Y=MY_v$ for $v\in V(\Gamma)$, we have $L\cong Y/M=LY_v/M\cong Y_v/M_v$, where $Y_v$ and $M_v$ is the group in Proposition~\ref{pro:vertex-stab}. Note that $L$ is a product of non-abelian simple groups. By Proposition~\ref{pro:vertex-stab}, $L\cong \A_6$, and either $(Y_v,M_v)=(\A_7\times \A_6,\A_7)$ or $((\A_7\times \A_6)\rtimes \mz_2, \S_7)$. Pick an element $a \in L$ of order $4$ and set $Z=M\times \langle a\rangle$. Then $M\unlhd Z$, $Z=MZ_v$, and $Z$ is arc-transitive. Furthermore, $Z_v/M_v\cong MZ_v/M=Z/M\cong \mz_4$, which is impossible by Proposition~\ref{pro:vertex-stab}.
Thus, $\soc(X)=M$, that is, $X$ is almost simple.

\medskip
Now $\Gamma$ is $M$-arc-transitive and $G$-vertex-transitive, and Proposition~\ref{Frattini} implies $M=GM_v$ for $v\in V(\Gamma)$.
By Proposition~\ref{pro:factorization}, $(M,G,M_v)$ lies in Table~\ref{tb:factorization 2}, and by Proposition~\ref{lm:Cosarc}, we may assume $\Gamma=\mathrm{Cos}(M,M_v,M_vgM_v)$ for some feasible element $g$ of $M$, that is, $g \in N_{M}(M_{uv})$, $g^2\in M_v$, $\langle M_v, g\rangle=M$ and $|M_v: M_{uv}|=7$, where $\{u,v\}$ is an edge of $\Gamma$.
For each possible $(M,G,M_v)$ in Table~\ref{tb:factorization 2}, we will determine whether exists such a graph $\Gamma$ by two steps: first verify whether $(M,G,M_v)$ indeed provides a factorization $M=GM_v$; then check for the existence of a feasible element $g$.
We use the following {\sc Magma}~\cite{Magma} code for the computation:
\begin{verbatim}
f:=function(M,orderG,orderMv)
Gs:=Subgroups(M:OrderEqual:=orderG);Mvs:=Subgroups(M:OrderEqual:=orderMv);
for i in [1..#Gs] do  G:=Gs[i]`subgroup;
for j in [1..#Mvs] do Mv:=Mvs[j]`subgroup;
if (#G*#Mv) eq (#(G meet Mv)*#M) then
print "find one factorization",GroupName(G),GroupName(Mv);
Mvus:=Subgroups(Mv:IndexEqual:=7);
for k in [1..#Mvus] do Mvu:=Mvus[k]`subgroup; NMvu:=Normalizer(M,Mvu);
if #sub<M|Mv,NMvu> eq #M  then
Tr:=Transversal(NMvu,Mvu);
gs:=[g: g in Tr|#(Mv meet Mv^g)*7 eq #Mv and #sub<M|Mv,g> eq #M and g^2 in Mv];
print "possible g",#gs; end if;end for;end if;end for;end for;return "";
end function;
\end{verbatim}
To verify whether $(M,G,M_v)$ provides  a factorization $M=GM_v$, we compute all conjugacy classes of subgroups of $M$ with orders $\vert G\vert$ and $\vert M_v \vert $.
According to  Corollary~\ref{coro:facs}, we may take $G$ and $M_v$ to be any group in the corresponding conjugacy classes.
Then factorization $M=GM_v$ holds if and only if $\vert M \vert \cdot \vert G \cap M_v\vert =\vert G\vert \cdot\vert M_v\vert$.
To check for the existence of a feasible element $g$, we examine the right transversal of $M_{vu}$ in $N_{M}(M_{uv})$, because if $g' \in M_{uv}g$ then $M_vg'M_v=M_vgM_v$ and hence $\mathrm{Cos}(M,M_v,M_vg'M_v)=\mathrm{Cos}(M,M_v,M_vgM_v)$.
For a specific triplet $(M,G,M_v)$, such as $ (\PSp(8,2),\Omega^-(8,2),\S_7)$ listed in  Row~1 of Table~\ref{tb:factorization 2}, we can use the above {\sc Magma}~\cite{Magma} function  with the following command:
\begin{verbatim}
f(PSp(8,2),#OmegaMinus(8,2),#Sym(7));
\end{verbatim}
It turns out that there is no such a factorization $\PSp(8,2)=\Omega^-(8,2)\S_7$ .
The computation results for all possible $(M,G,M_v)$ in Table~\ref{tb:factorization 2}, excluding Row 14, are listed in Table~\ref{tb:computation}. We note that, according to~\cite[Theorem~1.1]{LX2019}, $\PSp(12,2)$ and $\P\Omega^+(12,2)$ have no factorization with one factor having socle  $\mathrm{A}_6 \times \mathrm{A}_7$.

\begin{table}[h]
\caption{Computation results for $(M,G,M_v)$ in Table~\ref{tb:factorization 2}, excluding Row 14.}\label{tb:computation}%
\begin{center}
\begin{tabular}{ llllll }
\hline Row & $M$ & $G$ & $M_v$ & $M=GM_v$? & $g$ exists?\\
\hline 1 & $\PSp(8,2)$ & $\Omega^{-}(8,2)$ & $\S_7$ & No & \\
2 & $\PSp(12,2)$ & $\Omega^{-}(12,2)$ & $\A_7 \times \A_6$, $\S_7 \times \S_6 $, $(\A_7 \times \A_6) \rtimes \mz_2$&No & \\

3 & $\P\Omega^+(8,2)$ & $\Omega(7,2)$ & $\A_7$,  $\S_7$  &Yes & No \\
4 & $\P\Omega^+(12,2)$ & $\Omega(11,2)$ &$\SL(3,2) \times \S_4$, $  \mz_2^6 \rtimes(\SL(2,2) \times \SL(3,2))$  &No &   \\
&   & &   $[2^{20}] \rtimes(\SL(2,2) \times \SL(3,2))$  &No&   \\
&   & &   $\A_7 \times \A_6$, $\S_7 \times \S_6 $, $(\A_7 \times \A_6) \rtimes \mz_2$   &No&   \\

5& $\PSL(6,2)$ & $\PSL(5,2)$ & $\SL(3,2) \times \S_4$, $\mz_2^6 \rtimes(\SL(2,2) \times \SL(3,2))$ &No&   \\
6& $\Sp(6,2)$&$\A_8$&$\A_7$, $ \S_7$&No&  \\
7 & $\PSL(4,2)$ & $\A_6$ & $\A_7$&No&  \\
&   & &   $\SL(3,2) $  &Yes&No   \\
8 & $\PSL(4,2)$ & $\PSL(3,2)$ & $\A_7$ &Yes&Yes \\
9 & $\PSp(8,2)$ & $\PSp(4,4)$ & $\mz_2^6 \rtimes(\SL(2,2) \times \SL(3,2))$ &No& \\
10 & $\PSp(6,2)$ & $\PSU(4,2)$ & $\SL(3,2) $, $ \A_7 $, $ \S_7 $   &No&  \\
&   & &   $ \mz_2^3 \rtimes \SL(3,2)$  &Yes& No   \\
11 & $\PSU(6,2)$ & $\PSU(5,2)$ & $\mz_2^3 \rtimes \SL(3,2)$ &No&\\
12 & $\Sp(6,4)$ & $\PSU(4,4)$ & $\SL(3,2) \times \S_4$ &No& \\
13 & $\PSU(4,3)$& $\PSp(4,3)$&$\A_7$ &No&\\
15 & $\A_7$ & $\A_5$ & $\PSL(3,2)$ & Yes& No  \\
16 & $\A_{11}$ ($\A_{12}$)  & $\M_{11}$ ($\M_{12}$)  & $\A_7 $, $ \S_7$ & Yes& No  \\
17 & $\A_9$ & $\PSL(2,8)$ & $\A_7 $, $ \S_7$  & Yes& No  \\
18 & $\A_8$ & $\A_5,\A_6,\A_7$ & $\AGL(3,2)$ & Yes& No   \\
& & $ \A_6,\A_7$& $\SL(3,2)$ & Yes& No  \\
19 & $\M_{24}$ & $\M_{23}$ & $\mz_2^6 \rtimes(\SL(3,2) \times \S_3)$& Yes& No \\
&  &   &  $\SL(3,2)$& Yes& Yes  \\
\hline
\end{tabular}
\end{center}
\end{table}

In conclusion, the triplet $(M,G,M_v)$ can only appear in Row~$8$, Row~$14$, or Row~$19$ of Table~\ref{tb:factorization 2}, and $(\mathrm{soc}(X),G)=(M,G)=(\A_{8}, \PSL(3,2))$, $(\M_{24}, \M_{23})$ or $(\A_{n},\A_{n-1})$ with $n\geq 7$ and $n\bigm|2^{24}\cdot3^4\cdot5^2\cdot7$.

Assume further $G$ is regular.  Then $|M_v|=|M|/|G|$.
If $(M,G)=(\A_{8}, \PSL(3,2))$ or $(\M_{24},\M_{23})$, then by Table~\ref{tb:computation} $  M_v=\A_7$ or $\PSL_3(2)$, respectively, which contradicts $|M_v|=|M|/|G|$.
Therefore, $(M,G)=(\A_{n},\A_{n-1})$ with $n\geq 7$, and by Proposition~\ref{pro:vertex-stab} and~\cite{PYL19}, we have $n=7$, $3\cdot 7$, $3^2\cdot 7$, $2^2\cdot 3\cdot 7$, $2^3\cdot3\cdot7$, $2^3\cdot3^2\cdot5\cdot7$, $2^4\cdot3^2\cdot5\cdot7$, $2^6\cdot3\cdot7$, $2^7\cdot3\cdot7$,
 $2^6\cdot3^2\cdot7$, $2^6\cdot3^4\cdot5^2\cdot7$, $2^7\cdot3^4\cdot5^2\cdot7$, $2^8\cdot3^4\cdot5^2\cdot7$, $2^{10}\cdot3^2\cdot7$ or $2^{24}\cdot3^2\cdot7$.
\end{proof}

%

\begin{lemma}\label{lm:condi}
Under Assumption, let $G$ be regular on $V(\Gamma)$ and $R$ the radical of $\Aut(\Gamma)$. Then
\begin{enumerate}[\rm (1)]
\item  if $RG\neq R \times G$, then $G\lesssim \GL(d,r)$ for some prime $r$ and integer $d\geqslant 2$ with $r^{d}\bigm||R|$;
\item  if $7\bigm| |R|$, then $RG = R\times G$.
\end{enumerate}
\end{lemma}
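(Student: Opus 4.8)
I would prove (1) directly and deduce (2) from it, using that Proposition~\ref{pro:vertex-stab} forces vertex stabilisers of $\Gamma$ to have cyclic Sylow $7$-subgroups.

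\smallskip
\emph{Part (1).} Set $B=RG$. As $G$ is nonabelian simple and $R$ solvable, $G\cap R=1$, so $R\unlhd B$, $B=R\rtimes G$, $|B|=|R|\,|G|$; and if $G$ centralised $R$ then $B=R\times G$, contrary to hypothesis, so $G$ acts nontrivially on $R$ by conjugation. Take a chief series of $B$ through $R$. Since the stability group of a normal series of a finite group is nilpotent, while a nonabelian simple group has no nontrivial nilpotent quotient, $G$ cannot centralise all chief factors of $B$ inside $R$; fix one, $\bar N\cong\mz_r^{d}$ (elementary abelian, as $R$ is solvable), on which $G$ acts nontrivially. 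The conjugation map $B\to\GL(d,r)$ has kernel $C_B(\bar N)\not\ge G$, so by simplicity $G\cap C_B(\bar N)=1$ and $G\lesssim B/C_B(\bar N)\le\GL(d,r)$; here $d\ge2$ (else $\GL(1,r)$ is abelian) and $r^{d}=|\bar N|$ divides $|R|$. The same argument works verbatim for any solvable normal subgroup of $\Aut(\Gamma)$ not centralised by $G$, and I reuse it below.

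\smallskip
\emph{Part (2).} Suppose $7\mid|R|$ but $RG\ne R\times G$; if $G\unlhd A:=\Aut(\Gamma)$ then $G\cap R=1$ gives $RG=R\times G$, so assume $G$ nonnormal in $A$. With $B=RG$, the group $B$ is vertex-transitive and $|B_v|=|B|/|V(\Gamma)|=|R|$, so $B_v\le A_v$ yields $|R|\bigm||A_v|$; as every admissible $A_v$ has $7$-part equal to $7$ (Proposition~\ref{pro:vertex-stab}), $|R|_7=7$, and hence the prime $r$ of (1) is $\ne7$. If $R$ were transitive, then "$\Gamma$ bipartite'' is excluded by simplicity of $G$, and otherwise $|G|=|V(\Gamma)|\bigm||R|\bigm|2^{24}\cdot3^4\cdot5^2\cdot7$, reducing $G$ to the short list of Proposition~\ref{pro:four-factor}, each case killed by Proposition~\ref{pro:quotientgraph} and (1). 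So $R$ is semiregular: $\Gamma_R$ is a connected $7$-valent $(A/R)$-arc-transitive graph with $(A/R)_{\bar v}\cong A_v$ (Proposition~\ref{pro:quotientgraph}), $A/R$ has trivial radical, and $\bar G:=GR/R\cong G$ is vertex-transitive but not regular on $V(\Gamma_R)$ with stabiliser of order $|R|>1$.

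\smallskip
Finally I split on whether $\bar G\unlhd A/R$. If so, $\bar G\le\soc(A/R)$ and $\bar G$ is arc-transitive on $\Gamma_R$, so $|R|$ is one of the finitely many stabiliser orders of Proposition~\ref{pro:vertex-stab}, and the requirement from (1) that some $r^{d}\mid|R|$ with $r\ne7$, $d\ge2$ and $d\ge R(G)$ contradicts Proposition~\ref{pro:lessGL}. If not, Lemma~\ref{lm:trival radical} gives $(\soc(A/R),\bar G)=(\A_8,\PSL(3,2))$, $(\M_{24},\M_{23})$, or $(\A_n,\A_{n-1})$ with $n\bigm|2^{24}\cdot3^4\cdot5^2\cdot7$, and $\soc(A/R)$ is arc-transitive on $\Gamma_R$ with a stabiliser $M_{\bar v}$ as in Table~\ref{tb:computation}, whence comparing orders yields $|R|=|\bar G|\,|M_{\bar v}|/|\soc(A/R)|$. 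For $\PSL(3,2)$ and $\M_{23}$ this forces $|R|=21$ and $|R|=7$ respectively, neither of which has a divisor $r^{d}$ with $d\ge2$, contradicting (1); for $\A_{n-1}$ one gets $|R|=|M_{\bar v}|/n$ and $r^{d}\mid|R|$ with $d\ge R(\A_{n-1})\ge n-5$ (Proposition~\ref{pro:lessGL}), which bounds $n$ because $|M_{\bar v}|$ is bounded (Proposition~\ref{pro:vertex-stab}), and a finite check disposes of the remaining values. \emph{The hard part} is this last step: the concrete case analysis over the pairs $(\A_n,\A_{n-1})$ and the admissible stabilisers $M_{\bar v}$, verifying in each surviving case that $|M_{\bar v}|/n$ carries no prime power of exponent $\ge R(\A_{n-1})$; the two side cases ($R$ transitive, and $\bar G$ normal in $A/R$) must be tidied up by the same boundedness reasoning, and care is needed since "$\bar G\unlhd A/R$'' does not by itself give "$G\unlhd A$''.
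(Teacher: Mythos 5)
Your part (1) is correct and is essentially the paper's own argument: both locate a chief factor $R_{j+1}/R_j\cong\mz_r^d$ of $B=RG$ inside $R$ on which $G$ acts nontrivially (the paper by induction along the series, you by the stability-group theorem), and both conclude $G\lesssim\GL(d,r)$ with $d\ge 2$ and $r^d\mid|R|$.

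Part (2) is where the proposal breaks down, and it is worth noting first that the paper's proof of (2) is a short \emph{local} argument requiring none of the machinery you invoke: since $|B_v|=|R|$ and $7^2\nmid|B_v|$, a Sylow $7$-subgroup $P$ of $R$ has order $7$; a chief factor of $B$ of order $7$ must be centralised by $G$ (else $G\lesssim\mz_6$), and an arc-transitivity count forces that chief factor to the bottom of the series, so $P\unlhd B$; then $C_B(P)=(P\times L)G$ with $L$ a Hall $7'$-subgroup of $C_B(P)\cap R$, a second arc-transitivity count kills $L$, so $C_B(P)=P\times G$, and $G$ is characteristic in $C_B(P)\unlhd B$, giving $B=R\times G$. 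Your alternative route has two concrete gaps. First, in the sub-case $\overline{G}\unlhd A/R$ you claim that the existence of $r^d\mid|R|$ with $d\ge R(G)$ ``contradicts Proposition~\ref{pro:lessGL}''; but that proposition only bounds $R(\A_n)$, whereas $G$ is an arbitrary non-abelian simple group in this sub-case, and even for alternating $G$ there is no contradiction: take $G=\A_8$, for which $R_2(\A_8)=4$, while $|R|$ could equal $2^6\cdot 3\cdot 7$ (a legitimate stabiliser order from Proposition~\ref{pro:vertex-stab} that divides $|\A_8|$), so $2^4\mid|R|$ is perfectly consistent. Eliminating such configurations needs the coset-graph feasibility analysis of Table~\ref{tb:computation}, which you do not invoke here. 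Second, the case ``$R$ transitive'' cannot be ``killed by Proposition~\ref{pro:quotientgraph}'', since that proposition applies only to normal subgroups with at least three orbits; this case requires the separate argument of Lemma~\ref{lm:condi3} — which itself cites Lemma~\ref{lm:condi}(2), so importing it would be circular. Together with the explicitly deferred ``hard part'' (the case analysis over $(\A_n,\A_{n-1})$), part (2) is not established as written.
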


\begin{proof}

Set $B=RG$. Since $R$ is solvable and $G$ is non-abelian simple, we have $R\cap G=1$ and $|B|=|R||G|$. Since $R$ is solvable, $B$ has a series of normal subgroup $R_i$ such that
\[
B>R=R_s>\cdots>R_1>R_0=1,
\]
where $R_i \unlhd B$ and $R_{i+1} / R_i$ is elementary abelian for every $0\leq i\leq s-1$.

To prove part~(1), assume $B\not=R\times G$. Then there exists some $0 \leq j \leq s-1$ such that  $G R_i=G \times R_i$ for every $0 \leq i \leq j$, but $G R_{j+1} \neq G \times R_{j+1}$. In particular, $G R_j=G \times R_j$.

Since $R_j$ is solvable, $R_j\cap G=1$ and $GR_j/R_j\cong G$. By the simplicity of $G$, we have $GR_j/R_j\cap R_{j+1}/R_j=1$, and the conjugation action of $GR_j/R_j$ on $R_{j+1}/R_j$ is either trivial, or faithful. Suppose the action is trivial. Then $GR_{j+1}/R_j=(GR_j/R_j)(R_{j+1}/R_j)=GR_j/R_j\times R_{j+1}/R_j$, implying that $G R_j \unlhd G R_{j+1}$. Noting that $G$ is characteristic in $G R_j$ as $G R_j=G \times R_j$, we have $G\unlhd G R_{j+1}$, and so $G R_{j+1}=G \times R_{j+1}$, a contradiction. It follows that $GR_j/R_j\cong G$ has a faithful conjugation action $R_{j+1} / R_j$. Since $R_{j+1} / R_j\cong \mz_r^d$ for some prime $r$ and positive integer $d$, we have $G \lesssim \GL(d, r)$, and since $G$ is non-abelian simple, we have $d\geq 2$. Clearly, $r^d\ \mid |R|$. This completes the proof of part~(1).

\medskip

To prove part~(2), assume $7\bigm| |R|$. Since $G$ is regular, Proposition~\ref{Frattini} implies that $B=GB_v$ for $v\in V(\Gamma)$ and $|B|=|G||B_v|$. Recall that $|B|=|G||R|$. Then $|R|=|B_v|$ and hence $7\bigm| |B_v|$, that is, $B$ is arc-transitive. Since $7^2  \nmid  |B_v|$, we have $7^2\nmid |R|$, and so a Sylow $7$-subgroup, say $P$, of $R$ has order $7$, that is, $P\cong\mz_7$.

Now we claim $P\unlhd B$. Suppose for a contradiction that $P\ntrianglelefteq B$. Since $R\unlhd B$ is soluble, $B$ has a series of normal subgroups: $$1\unlhd R_1\unlhd R_2\unlhd \cdots\unlhd R_t=R\unlhd B,$$
such that $R_i\unlhd B$ for every $1\leq i\leq t$ and $R_2/R_1\cong\mz_{7}$. Note that $G\cap R_1=1$ and $GR_1/R_1\cong G$. The conjugation action of $GR_1/R_1$ on $R_2/R_1$ is trivial, for otherwise $G\lesssim\Aut(R_2/R_1)=\mz_6$. And so $GR_1/R_1$ commutes with $R_2/R_1$ pointwise. Since $R_2/R_1\cap GR_1/R_1=1$, we have $GR_2/R_1=GR_1/R_1\times R_2/R_1$, implying $GR_1\unlhd GR_2$. By the regularity of $G$, we obtain $|(GR_2)_v|=|R_2|$ and $|(GR_1)_v|=|R_1|$ for $v\in V(\Gamma)$. Then $GR_2$ is arc-transitive as $7\bigm| |R_2|$.

Note that $(GR_2)_v$ is primitive on $\Gamma(v)$ as $\Gamma$ has valency $7$. If $R_1\not=1$, then  $GR_1$ is arc-transitive as $|(GR_1)_v|=|R_1|$ and $GR_1\unlhd GR_2$. It follows $7 \bigm|  |R_1|$, and since $R_2/R_1\cong\mz_{7}$, we have $7^2 \bigm|  |R|$, a contradiction. Thus, $R_1=1$, and so $R_2$ is a normal Sylow $7$-subgroup of $R$. This yields that $P=R_2\unlhd B$, as claimed.

Let $C=C_{B}(P)$. Since $P\unlhd B$, we have $P\leq C$ and $C\unlhd B$. Then the conjugation action of $G$ on $P$ is trivial, for otherwise $G\leq\Aut(P)\cong\mz_6$, contradicting the simplicity of $G$. Thus, $GP=G\times P$, and $G\leq C$. Furthermore, $C=C\cap B=C\cap RG=(C\cap R)G$. Noting that $P$ is a Sylow $7$-subgroup of $C\cap R$, we may write $C\cap R=P\times L$, where $L$ is a Hall $7'$-subgroup of $C\cap R$, and then $C=(P\times L)G=P\times LG$, implying $LG\unlhd C$.

Since $G\times P\leq C$, $C$ is arc-transitive. If $L\neq1$, then $LG$ is arc-transitive becasue  $|(LG)_v|=|LG|/|G|=|L|$ and $LG\unlhd C$. It follows that $7\bigm| |(LG)_v|$ and so $7\bigm| |L|$, a contradiction. Therefore, $L=1$ and $C=P\times G$. In particular, $G$ is characteristic $C$, and since $C\unlhd B$, we have $G\unlhd B$, yielding $B=RG=R\times G$. This completes the proof.
\end{proof}

\begin{lemma}\label{lm:condi3}
Under Assumption, let $G$ be regular on $V(\Gamma)$ and $R$ the radical $X$. Then $R$ has at least three orbits on $V(\Gamma)$, and $RG=R\times G$.
\end{lemma}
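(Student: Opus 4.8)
The plan is to establish the two assertions in turn: first determine the number of orbits of $R$, then prove the direct decomposition $RG=R\times G$.

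For the orbit count, observe that since $R\unlhd X$ and $G\le X$, the subgroup $G$ normalizes $R$, so the partition of $V(\Gamma)$ into $R$-orbits is $G$-invariant; as $G$ is transitive it permutes the $R$-orbits transitively, so their number is the index of a subgroup of $G$. Since $G$ is non-abelian simple it has no subgroup of index $2$, so $R$ cannot have exactly two orbits, and it remains to exclude the possibility that $R$ is transitive. Assume $R$ is transitive. Because the valency is the prime $7$, $X_v$ acts primitively on $\Gamma(v)$, and as $R_v\unlhd X_v$ the image $R_v^{\Gamma(v)}$ is either transitive or trivial. In the first case $7\bigm||R|$, so Lemma~\ref{lm:condi}(2) gives $RG=R\times G$, whence $R\le C_{\Sym(V(\Gamma))}(G)$; but $G$ being regular forces $C_{\Sym(V(\Gamma))}(G)$ to be regular and isomorphic to $G$ by Proposition~\ref{pro:tran-centra}, so the transitivity of $R$ yields $R=C_{\Sym(V(\Gamma))}(G)\cong G$, contradicting the solvability of $R$. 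In the second case $R_v$ fixes $\Gamma(v)$ pointwise, hence $R_v\le R_u$ for every $u\in\Gamma(v)$; edge-transitivity gives $|R_u|=|R_v|$, so $R_u=R_v$, and connectivity then makes $R_v$ independent of $v$ and therefore trivial, so $R$ is semiregular and hence regular with $|R|=|V(\Gamma)|=|G|$. Then $X=R\rtimes X_v$ with $R\cap X_v=1$, so $G\cong GR/R\hookrightarrow X/R\cong X_v$; running through the list of vertex stabilizers in Proposition~\ref{pro:vertex-stab} and identifying their non-abelian simple subgroups with the help of Proposition~\ref{pro:four-factor} leaves only finitely many triples $(X_v,G,R)$, each of which is eliminated by a short argument---either the homomorphism $X_v\to\Aut(R)$ defining $X=R\rtimes X_v$ cannot be injective, because $|X_v|\nmid|\Aut(R)|$ for any solvable group $R$ of the relevant order, in which case $G\unlhd X$ and the non-abelian simple group $G$ would have to centralize the regular subgroup $R$ and hence embed in it, which is impossible; or one verifies via Proposition~\ref{lm:Cosarc} that no feasible $2$-element $g$ exists, so no connected graph arises. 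Hence $R$ has at least three orbits.

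Since $R$ now has at least three orbits, it is semiregular by Proposition~\ref{pro:quotientgraph}, so $|R|$ divides $|V(\Gamma)|=|G|$. If $7\bigm||R|$ then $RG=R\times G$ by Lemma~\ref{lm:condi}(2), and we are done. So assume $7\nmid|R|$ and, for a contradiction, that $RG\ne R\times G$; by Lemma~\ref{lm:condi}(1) there are a prime $r$ and an integer $d\ge2$ with $G\lesssim\GL(d,r)$ and $r^{d}\bigm||R|$, hence $r^{d}\bigm||G|$. On the other hand $\Gamma_R$ is a connected $7$-valent $X/R$-arc-transitive graph, $X/R$ has trivial radical, and $GR/R\cong G$ is a vertex-transitive non-abelian simple subgroup of $X/R$; applying Lemma~\ref{lm:trival radical} to $\Gamma_R$ (and treating the easy subcase $GR/R\unlhd X/R$ separately, where $RG=R\times G$ can be checked directly) forces $G$ to be isomorphic to $\PSL(3,2)$, $\M_{23}$, or $\A_{n-1}$ for some $n\ge7$ with $n\bigm|2^{24}\cdot3^{4}\cdot5^{2}\cdot7$. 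For the alternating candidates the lower bounds on $R_{r}(\A_{n-1})$ recorded in Proposition~\ref{pro:lessGL} are incompatible with $G\lesssim\GL(d,r)$, $d\ge2$ and $r^{d}\bigm||G|$, and $\PSL(3,2)$ and $\M_{23}$ are ruled out in the same way using their minimal projective representation degrees. This contradiction shows $RG=R\times G$.

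The step I expect to be the main obstacle is the elimination of the case ``$R$ transitive''. The natural inductive approach---passing to the normal quotient by a minimal normal subgroup of $X$ contained in $R$---does not preserve the regularity of $G$, since only vertex-transitivity of the image survives in the quotient, so instead one must reduce directly to the situation where $R$ is regular of order $|G|$ and then confront the explicit list of $7$-valent vertex stabilizers from Proposition~\ref{pro:vertex-stab}. The attendant finite case analysis---deciding, for each admissible stabilizer $X_v$, whether it can act faithfully by automorphisms on a solvable group of the prescribed order and whether a feasible coset-graph element exists---is where the bulk of the technical effort lies, and (as in the proof of Lemma~\ref{lm:trival radical}) is naturally carried out with machine computation.
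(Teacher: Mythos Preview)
Your overall strategy parallels the paper's, but there is a genuine gap in the second half. After reducing to the non-normal case of Lemma~\ref{lm:trival radical} you assert that the lower bounds $R_{r}(G)$ from Proposition~\ref{pro:lessGL}, combined with $r^{d}\bigm||G|$, eliminate every candidate for $G$. This is false: $\PSL(3,2)\cong\GL(3,2)$ has $R_{2}=3$ and $2^{3}\bigm||\PSL(3,2)|$, so nothing is contradicted; likewise $\A_{8}\cong\GL(4,2)$ has $R_{2}(\A_{8})=4$ and $2^{4}\bigm||\A_{8}|$, so the pair $(\A_{9},\A_{8})$ (with $9\bigm|3^{4}$) survives your test as well. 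The paper closes this gap with an arithmetic step you omit: since $T=\soc(X/R)$ is arc-transitive on $\Gamma_{R}$ one has $7\bigm||T_{\overline v}|=|T|\cdot|R|/|G|$, and the standing assumption $7\nmid|R|$ then forces $7\bigm||T|/|G|$. That single observation kills $(\A_{8},\PSL(3,2))$ and $(\M_{24},\M_{23})$ outright (indices $120$ and $24$) and, in the alternating case, forces $7\bigm|n$, cutting the list to $n\in\{14,21\}$, where the degree bounds do finally bite.

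Your treatment of the sub-case ``$R$ regular'' in the first half also has a structural problem. Because $R$ is transitive, $C_{X}(R)$ is semiregular by Proposition~\ref{pro:tran-centra}, hence $X_{v}\cap C_{X}(R)=1$ and the conjugation map $X_{v}\to\Aut(R)$ is \emph{always} injective; your first alternative (``cannot be injective, whence $G\unlhd X$'') therefore never arises, and the implication from non-injectivity to $G\unlhd X$ is in any case unjustified, since you only know $G$ is isomorphic to a subgroup of $X_{v}$, not that it lies inside $X_{v}$. You are then left with a feasibility search over all solvable $R$ of order $|G|$ and all faithful actions of $X_{v}$ on them---far more than the ``short argument'' promised. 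The paper avoids this entirely: from $RG\ne R\times G$ Lemma~\ref{lm:condi}(1) gives $G\lesssim\GL(d,r)$ with $r^{d}\bigm||R|=|G|$, while $G\cong B_{v}/R_{v}$ already pins $G$ to $\{\A_{5},\A_{6},\A_{7},\PSL(3,2)\}$; for the first three $|G|\bigm|2^{3}\cdot3^{2}\cdot5\cdot7$ forces $(r,d)=(2,3)$, and none embeds in $\GL(3,2)$, while for $\PSL(3,2)$ one has $7\bigm||R|$ and Lemma~\ref{lm:condi}(2) finishes---no computation needed.
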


\begin{proof}
Set $B=RG$ and let $v\in V(\Gamma)$. Since $R$ is solvable, $G\cap R=1$, $|B|=|R||G|$, and $G\cong B/R$. Since $G$ is regular, Proposition~\ref{Frattini} implies $B=GB_v$, and so $|B_v|=|B|/|G|=|R|$.

First we claim that $R$ has at least three orbits on $V(\Gamma)$. Suppose that $R$ has two orbits. Since $R\unlhd X$ and $X$ is arc-transitive, $\Gamma$ is bipartite and $G$ has a normal subgroup of index $2$, contradicting the simplicity of $G$.

Suppose that $R$ is transitive on $V(\Gamma)$. If $B=R \times G$, by Propostiton~\ref{pro:tran-centra} we have that both $R$ and $G$ are regular and $R\cong G$, contradicting the simplicity of $G$.

Thus, we may assume $B\not=R \times G$. By Lemmma~\ref{lm:condi}~(1), $G\lesssim \GL(d,r)$ for some prime $r$ and integer $d \geq 2$, with $r^{d}\bigm||R|$. Since $R$ is transitive, we have $B=RB_v$, and hence $G\cong B/R=RB_v/R\cong B_v/R_v$. By Proposition~\ref{pro:vertex-stab}, $G=\A_5,\A_6,\A_7$ or $\PSL(3,2)$.

Suppose that $R$ is regular on $V(\Gamma)$. Then $|R|=|G|=|V(\Gamma)|$. If $G\in\{\A_5, \A_{6}$, $\A_{7}$\}, then
$|R| \bigm| 2^{3}\cdot3^{2}\cdot5\cdot7$, and since $\GL(2,3)$ is solvable, we have $G\lesssim \GL(3,2)$, which is impossible by
Atlas~\cite{Atlas}. It follows that $G=\PSL(3,2)$, and so $|R|=|G|=|\PSL(3,2)|=2^3\cdot3\cdot7$. This implies $7\bigm||R|$, and by Lemma~\ref{lm:condi}~(2), $B=R \times G$, a contradiction.

Suppose that $R$ is not regular on $V(\Gamma)$.  Since $R\unlhd X$, $R$ is arc-transitive and hence $B$ is arc-transitive. In particular, $7\nmid  |B_v/R_v|$. Since $G\cong B_v/R_v$, we have $G=\A_5$ or $\A_6$, which is impossible by Proposition~\ref{pro:vertex-stab}.

The claim follows, that is, $R$ has at least three orbits.

\medskip

Now we prove $B=R\times G$. This is clearly true for $R=1$. We assume $R\not=1$.
By Proposition~\ref{pro:quotientgraph}, $R$ is semiregular on $V(\Gamma)$ and the quotient graph $\Gamma_{R}$ is a connected $7$-valent $X/R$-arc-transitive graph, where $R$ is the kernel of $X$ on $V(\Gamma_R)$ and $X/R \leq\Aut(\Gamma_{R}$). Furthermore, $G\cong B/R$ is vertex-transitive on $V(\Gamma_{R})$ and $(B/R)_{\overline{v}}\cong B_v$, where $\overline{v}$ is the orbit of $R$ on $V(\Gamma)$ containing $v$. Since $G$ is regular, $|V(\Gamma_R)|=|G|/|R|$, and then $G\cong GR/R=B/R\leq X/R$, $|B_v|=|(B/R)_{\overline{v}}|=|B/R|/(|G|/|R|)=|B|/|G|=|R|$.

Since $R$ is the radical of $X$, $X/R$ has a  trivial radical. By Lemma~\ref{lm:trival radical}, either
\begin{itemize}
  \item [\rm (a)]$B/R \unlhd X/R$, or
  \item [\rm (b)]$X/R$ is almost simple with $H:=B/R\leq T:=\soc(X/R)$, and $\Gamma_R$ is $T$-arc-transitive. Furthermore, $(T,H)\cong(\A_{8}, \PSL(3,2))$, $(\M_{24}, \M_{23})$ or $(\A_{n},\A_{n-1})$ with $n\geq 7$ and $n\bigm|2^{24}\cdot3^4\cdot5^2\cdot7$.
\end{itemize}

For case~(a), we have $B\unlhd X$. Since $R\not=1$, we have $B_v\not=1$, and $\Gamma$ is $B$-arc-transitive as $X$ is arc-transitive. Thus, $7\bigm| |B_v|$, and since $|B_v|=|R|$,
Lemma~\ref{lm:condi}~(2) implies $B=G\times R$, as required.

For case~(b), we have $G\cong H=B/R\ntrianglelefteq X/R$. To finish the proof, we suppose for a contradiction that $B\neq G\times R$. By Lemma~\ref{lm:condi}~(1), $G\lesssim \GL(d,r)$ for some prime $r$ and integer $d\geq2$, with $r^{d}\bigm|  |R|$. Since $|R|\bigm|  |G|$, we have $r^d\bigm|  |G|$, and since $|R|=|B_v|$, from Proposition~\ref{pro:vertex-stab} we have  $|R|\bigm|  2^{24}\cdot3^4\cdot5^2\cdot7$. In particular, $r=7,5,3$, or $2$. Since $\GL(2,5)$ contains no nonabelian simple subgroup, and  $\GL(d,r)$ is insolvable, we have either $r=3$ and $3\leq d\leq 4$, or $r=2$ and $3\leq d\leq 24$.

Let $r=3$ and $3\leq d\leq 4$. If $d=3$ then by Atlas~\cite{Atlas}, $\SL(3,3)=\PSL(3,3)$ is the unique nonabelian simple subgroup of $\GL(3,3)$, implying $G=\PSL(3,3)$. However, $\PSL(3,3)$ is not listed in (b) as $H\cong G$. If $d=4$ then again by Atlas~\cite{Atlas}, $G=\PSL(3,3),\PSU(4,2),\A_6$ or $\A_5$, and they are all not  listed in  (b), a contradiction.

Let $r=2$ and $3\leq d\leq 24$. Since $\Gamma_{R}$ is $T$-arc-transitive, we have
$|T|=|V(\Gamma_{R})||T_{\overline{v}}|=|G|/|R|\cdot|T_{\overline{v}}|$, and hence $7$ is a divisor of $|T_{\overline{v}}|=|T||R|/|G|$. If $7\nmid |T|/|G|$, then $7\ |\ |R|$, and $B=G\times R$ by Lemma~\ref{lm:condi}~(2), a contradiction. Thus, $7\ |\ |T|/|G|=|T|/|H|$, and by the list (b), $(T,H)\cong (\A_{n},\A_{n-1})$ with $7\ |\ n\ |\ 2^{24}\cdot3^4\cdot5^2\cdot7$.


Suppose $(T,H)\cong (\A_{n},\A_{n-1})$ with $7\ |\ n\ |\ 2^{24}\cdot3^4\cdot5^2\cdot7$. The above paragraph implies that $n\not=7$. It follows that $n=7\ell$ for some $\ell\geq 2$.
By Proposition~\ref{pro:lessGL}, $(n-1)-2\leq d\leq 24$, and hence $n=14$ or $21$, that is, $H=\A_{13}$ or $\A_{20}$. For $H=\A_{13}$, we have $|H|_2=|G|_2=2^9$, and so $d\leq 9$. On the other hand, Proposition~\ref{pro:lessGL} implies that $d\geq R(\A_{13})=13-2=11$, a contradiction. For $H=\A_{20}$, similarly we have $|H|_2=|G|_2=2^{17}$, and so $d\leq 17$, which is impossible because Proposition~\ref{pro:lessGL} implies that $d\geq R(\A_{20})=18$.
\end{proof}

\begin{lemma}\label{lm:condi5}
Under Assumption, let $G$ be regular on $V(\Gamma)$ and $R$ the radical of $X$. Then $G\unlhd X$ or $\soc(X/R)=(T\times R)/R$ for a non-abelian simple group $T$. In particular, $T\unlhd X$ is arc-transitive on $\Gamma$ with $G<T$ and $(T,G)$=$(\A_{n},\A_{n-1})$ with $n=7$, $3\cdot 7$, $3^2\cdot 7$, $2^2\cdot 3\cdot 7$, $2^3\cdot3\cdot7$, $2^3\cdot3^2\cdot5\cdot7$, $2^4\cdot3^2\cdot5\cdot7$, $2^6\cdot3\cdot7$, $2^7\cdot3\cdot7$,
 $2^6\cdot3^2\cdot7$, $2^6\cdot3^4\cdot5^2\cdot7$, $2^8\cdot3^4\cdot5^2\cdot7$, $2^7\cdot3^4\cdot5^2\cdot7$, $2^{10}\cdot3^2\cdot7$, $2^{24}\cdot3^2\cdot7$.
\end{lemma}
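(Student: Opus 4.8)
The plan is to pass to the normal quotient $\Gamma_R$ and then feed the outcome back into the almost-simple classification of Lemma~\ref{lm:trival radical}. By Lemma~\ref{lm:condi3}, $RG=R\times G$ (in particular $G$ centralises $R$) and $R$ has at least three orbits on $V(\Gamma)$; hence, by Proposition~\ref{pro:quotientgraph}, $R$ is semiregular, $\Gamma_R$ is a connected $7$-valent $X/R$-arc-transitive graph with $X_v\cong(X/R)_{\bar v}$, and $X/R$ has trivial radical. Moreover $\bar G:=RG/R\cong G$ is a transitive non-abelian simple subgroup of $X/R$ on $V(\Gamma_R)$. (The argument below goes through unchanged when $R=1$, so that case needs no separate treatment.) If $\bar G\unlhd X/R$, then its full preimage $B:=RG$ is normal in $X$; but $B=RG=R\times G$ by Lemma~\ref{lm:condi3}, so $G=B^{(\infty)}$, the last term of the derived series, is characteristic in $B$ and therefore $G\unlhd X$ --- the first alternative of the lemma.

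Assume from now on that $\bar G$ is nonnormal in $X/R$, and apply Lemma~\ref{lm:trival radical} to $X/R$: it is almost simple, $\bar T:=\soc(X/R)$ is non-abelian simple with $\bar G<\bar T$, $\Gamma_R$ is $\bar T$-arc-transitive, and $(\bar T,\bar G)$ equals $(\A_8,\PSL(3,2))$, $(\M_{24},\M_{23})$, or $(\A_n,\A_{n-1})$ with $n\geq7$ and $n\mid 2^{24}\cdot3^4\cdot5^2\cdot7$. I shall extract the required simple $T$ from the centraliser of $R$. Let $\hat T\unlhd X$ be the preimage of $\bar T$, so $R=\mathrm{rad}(\hat T)$ is characteristic in $\hat T$; then $C:=C_{\hat T}(R)\unlhd X$ and $G\leq C$ (as $G$ centralises $R$). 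Since $C\unlhd\hat T$, the image $CR/R$ is normal in $\bar T$ and contains $\bar G\neq1$, which is nonnormal in the simple group $\bar T$; hence $CR/R=\bar T$, so $C/(C\cap R)\cong\bar T$ and $C\cap R\leq Z(C)$, i.e. $C$ is a central extension of $\bar T$. Put $T:=C^{(\infty)}$. Then $T$ is perfect with $T(C\cap R)=C$, so $T/(T\cap R)\cong\bar T$ and $Z(T)=T\cap R$; thus $T$ is quasisimple with $T/Z(T)\cong\bar T$, $T$ is characteristic in $C$ and hence $T\unlhd X$, and $G=G^{(\infty)}\leq C^{(\infty)}=T$.

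The crux, and the step I expect to demand genuine care, is to prove $Z(T)=1$. First, $T\supseteq G$ is transitive on $V(\Gamma)$ but not regular (since $|T|\geq|\bar T|>|\bar G|=|G|=|V(\Gamma)|$), hence not semiregular, so $T$ is arc-transitive on $\Gamma$ and $7\mid|T_v|=|T|/|V(\Gamma)|$. This at once excludes $(\bar T,\bar G)=(\M_{24},\M_{23})$ (there $T\cong\M_{24}$ and $|T_v|=24$) and $(\bar T,\bar G)=(\A_8,\PSL(3,2))$ (there $|T_v|\in\{120,240\}$), since $7$ divides neither. So $(\bar T,\bar G)=(\A_n,\A_{n-1})$ with $n\geq7$. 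Suppose $Z(T)\neq1$; since $Z(T)\leq R$ and $T/Z(T)\cong\A_n$, the group $T$ is a covering group $c.\A_n$ with $c:=|Z(T)|=2$ (and $c\in\{2,3,6\}$ only when $n=7$). As $G\cap Z(T)=1$ and $Z(T)$ is central, $GZ(T)=G\times Z(T)\cong\A_{n-1}\times\mz_c$ is a subgroup of $c.\A_n$ of index $|c.\A_n|/(c\,|\A_{n-1}|)=|\A_n|/|\A_{n-1}|=n$. But by Proposition~\ref{pro:shurmult} every subgroup of index $n$ in $2.\A_n$ is isomorphic to $2.\A_{n-1}$, and, by the Atlas~\cite{Atlas}, every subgroup of index $7$ in $3.\A_7$, resp. $6.\A_7$, is isomorphic to $3.\A_6$, resp. $6.\A_6$; each of these is perfect, whereas $\A_{n-1}\times\mz_c$ is not --- a contradiction. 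Hence $Z(T)=1$ and $T$ is non-abelian simple.

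With $T$ non-abelian simple the proof concludes quickly: $T\cap R\unlhd T$ is solvable, so $T\cap R=1$; since $[T,R]=1$ we get $TR=T\times R$, and $TR/R\cong T\cong\bar T$ forces $TR/R=\soc(X/R)$, i.e. $\soc(X/R)=(T\times R)/R$. Also $T\unlhd X$ is arc-transitive on $\Gamma$ with $G<T$. Applying Lemma~\ref{lm:trival radical} to the pair $(T,G)$ --- $T$ has trivial radical, and $G$ is regular and, being a proper subgroup of the simple group $T$, nonnormal in $T$ --- yields $(T,G)=(\soc(T),G)=(\A_n,\A_{n-1})$ with $n$ in the list claimed in the statement. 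Apart from the routine chasing of normal subgroups through the quotient by $R$, the only substantive obstacle is the verification that $Z(T)=1$, and within it the handling of the exceptional covers $2.\A_n$, $3.\A_7$ and $6.\A_7$ of the alternating groups.
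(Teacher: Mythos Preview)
Your argument is correct and follows the same architecture as the paper's proof: pass to $\Gamma_R$, invoke Lemma~\ref{lm:trival radical} on $X/R$, lift $\soc(X/R)$ to a quasisimple normal subgroup of $X$ via the centraliser of $R$ in its preimage, eliminate a non-trivial centre by confronting $G\times Z(T)$ with the known index-$n$ subgroups of the covers $c.\A_n$, and then feed the resulting simple $T$ back into the regular case of Lemma~\ref{lm:trival radical}. The only deviations are organisational: you discard $(\M_{24},\M_{23})$ and $(\A_8,\PSL(3,2))$ up front from the divisibility $7\mid|T_v|$, whereas the paper lets the first survive to the final application of Lemma~\ref{lm:trival radical} and kills the second (in the $|Z(C')|=2$ branch) by checking $|C'_v|$ against Proposition~\ref{pro:vertex-stab}; and for the index-$7$ subgroups of $3.\A_7$ and $6.\A_7$ you appeal to the Atlas, while the paper uses a short \textsc{Magma} verification for $3.\A_7$ and, for $6.\A_7$, reduces to $2.\A_7$ and Proposition~\ref{pro:shurmult} (your Atlas citation here is a little optimistic --- the cleanest justification is exactly that reduction: any index-$7$ subgroup of $6.\A_7$ contains the centre, and its image in the quotient $2.\A_7$ would give $\A_6\times\mz_2$ of index $7$, contradicting Proposition~\ref{pro:shurmult}).
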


\begin{proof}
Set $B=RG$. By Lemma~\ref{lm:condi3}, $B=R \times G$ and $R$ has at least three orbits on $V(\Gamma)$. By Proposition~\ref{pro:quotientgraph},
$R$ is semiregular on $V(\Gamma)$, and the quotient graph $\Gamma_{R}$ is a connected $7$-valent $X/R$-arc-transitive with $X/R \leq \Aut(\Gamma_{R}$). Moreover, $G\cong B/R$ is vertex-transitive on $V(\Gamma_{R})$ and $G$ is characteristic in $B$.

To prove the lemma, assume $G\ntrianglelefteq X$. Since $G$ is characteristic in $B$, we have $B\ntrianglelefteq X$, and hence $G\cong B/R \ntrianglelefteq X/R$. Since $R$ is the largest solvable normal subgroup of $X$, $X/R$ has trivial radical. By Lemma~\ref{lm:trival radical}, $X/R$ is almost simple with $G\cong B/R<\mathrm{soc}(X/R)$, $\Gamma_R$ is soc$(X/R)$-arc-transitive, and  $(\mathrm{soc}(X/R),B/R)=(\A_{8}, \PSL(3,2))$, $(\M_{24}, \M_{23})$, or $(\A_{n},\A_{n-1})$ with $n\geq 7$ and $n\bigm|2^{24}\cdot3^4\cdot5^2\cdot7$.

Set soc$(X/R)=I/R$ and $C=C_{I}(R)$. Then $I/R$ is non-abelian simple because $X/R$ is almost simple. Since $G\cong B/R<I/R$, we have $|G|<|I/R|$ and $B<I$, and since $B=R\times G$, we have $G\leq C$. Clearly, $C\unlhd I$ and $C\cap R=Z(R)\leq Z(C)$. Since $G\cong (R\times G)/R\leq CR/R\unlhd I/R$, the simplicity of $I/R$ implies that $I=CR$.

Thus, $C/(C\cap R)\cong CR/R\cong I/R$, and since $C\cap R\leq Z(C)$, the simplicity of $C/(C\cap R)$ implies that $C\cap R=Z(C)$ and $C/Z(C)\cong I/R$. Again by the simplicity of $I/R$, we have $C'/C'\cap Z(C)\cong C'Z(C)/Z(C)=(C/Z(C))'\cong (I/R)'\cong I/R\cong C/Z(C)$, which implies that $Z(C')=C'\cap Z(C)$, $C'/Z(C')\cong I/R$ and $C=C'Z(C)$. Furthermore, $C'=(C'Z(C))'= C''$ and $C'$ is a covering group of $I/R$. Consequently, $Z(C')$ is a quotient of Mult($I/R$).

Recall that $|G|<|I/R|\leq|C'|$. Since $G\leq C$ and $C/C'$ is abelian, we have $G<C'\unlhd I$ and hence $(C')_v\not=1$ and $I_v\not=1$.
Note that $\Gamma$ has valency $7$ and $X_v$ is primitive on $\Gamma(v)$. Then $\Gamma$ is both $I$-arc-transitive and $C'$-arc-transitive. Clearly, $G\times Z(C')\leq C'$.

By \cite[Theorem~5.1.4]{K-Lie}, Mult($\M_{24})=1$, Mult($\A_{7}$)=$\mz_{6}$ and Mult($\A_{n}$)=$\mz_{2}$ for $n \geq 8$. Recall that $I/R=\mathrm{soc}(X/R)=\M_{24}$, or $\A_{n}$ with $n\geq 7$ and $n\bigm|2^{24}\cdot3^4\cdot5^2\cdot7$, and $Z(C')$ is a quotient of Mult($I/R$).

Suppose $|Z(C')|>2$. Then
$(I/R,B/R)=(\A_7,\A_6)$, and so $Z(C')=\mz_3$ or $\mz_6$, which implies that $C'=3.\A_7$ or $6.\A_7$.
For $C'=3.\A_7$, $G\times Z(C')\cong \A_6\times \mz_3$ is a subgroup of index $7$ in $C'$.
We construct the group $C'=3.\A_7$ in Magma~\cite{Magma} with the following command
\begin{verbatim}
G:=pCover(Alt(7), FPGroup(Alt(7)), 3); X:=CosetImage(G,sub<G|>);
Hs:=Subgroups(X:IndexEqual:=7); [IsPerfect(i`subgroup): i in Hs];
\end{verbatim}
and it shows that $C'$ has only one conjugacy class of subgroups of index $7$ and these subgroups are perfect.
Therefore,  $C'$ has no subgroup not isomorphic to  $\A_6 \times \mz_3$, a contradiction.
For $C'=6.\A_7$, $G\times Z(C')\cong \A_6\times \mz_6$ is a subgroup of index $7$ in $C'$.
Then $C'$ has a quotient group isomorphic to $2.\A_7$ that has a subgroup $\mz_2\times \A_6$, which contradicts that $2.\A_{7}$ has a unique subgroup $2.\A_{6}$ of index $7$ in Proposition~\ref{pro:shurmult}.

Suppose $Z(C')=\mz_{2}$.
Then $(I/R,B/R)=(\A_{8},\PSL(3,2))$ or $(\A_n, \A_{n-1})$ with $n \geq 8$.
For $(I/R,B/R)$=$(\A_{n}, \A_{n-1})$ with $n \geq 8$, by  Proposition~\ref{pro:shurmult}, all subgroups of index $n$ of $C'$ is $2.\A_{n-1}$, contradicting that $G \times Z(C')\cong \A_{n-1} \times \mz_{2}$ is a subgroup of index $n$ of $C'$. Now let $(I/R,B/R)=(\A_{8}, \PSL(3,2))$. Note that
$|I/R|/|B/R|=|I/R|/|G|=2^3\cdot3 \cdot 5$, and  $C'$ is arc-transitive. Since $|C'_v|=|C'|/|G|=|Z(C')||I/R|/|G|$, we have $|C'_v|=2^4\cdot3\cdot5$, of which is impossible by Proposition~\ref{pro:vertex-stab}.

The above contradictions force $Z(C')=1$. This means that $C'\cong I/R$ is a non-abelian simple group. Note that $|I|=|I/R||R|=|C'||R|$, $G<C'\unlhd I$ and $C' \cap R=1$. Then $I=C'\times R$ and $\soc(X/R)=I/R=(C'\times R)/R$. Thus, $C'\times R\unlhd X$ and so $C'\unlhd X$ because $C'$ is characteristic in $C'\times R$. Since $G<C'$ and $C'$ is arc-transitive,
by taking $C'=X$ in Lemma~\ref{lm:trival radical}, we have $(C',G)=(\A_{n},\A_{n-1})$ with
$n=7$, $3\cdot 7$, $3^2\cdot 7$, $2^2\cdot 3\cdot 7$, $2^3\cdot3\cdot7$, $2^3\cdot3^2\cdot5\cdot7$, $2^4\cdot3^2\cdot5\cdot7$, $2^6\cdot3\cdot7$, $2^7\cdot3\cdot7$,
 $2^6\cdot3^2\cdot7$, $2^6\cdot3^4\cdot5^2\cdot7$, $2^8\cdot3^4\cdot5^2\cdot7$, $2^7\cdot3^4\cdot5^2\cdot7$, $2^{10}\cdot3^2\cdot7$, $2^{24}\cdot3^2\cdot7$. This completes the proof by taking $T=C'$.
\end{proof}

\bigskip
\noindent{\bf Proof of Theorem~\ref{th:main-vald}} Let $G$ be a non-abelian simple group and let $\Gamma$ be a connected $7$-valent symmetric Cayley graph on $G$. Set $A=\Aut(\Gamma)$ and $R=\mathrm{rad}(A)$. Note that $G\cong R(G)\leq A$ and $R(G)$ is regular on $V(\Gamma)$.
By taking $A=X$ and $R(G)=G$ in Lemma~\ref{lm:condi5}, either $R(G)\unlhd A$, that is, $\Gamma$ is normal, or $A$ contains an arc-transitive non-abelian simple normal subgroup $T$ such that $R(G)<T$ and $(T,R(G))=(\A_{n},\A_{n-1})$ with $n=7$, $3\cdot 7$, $3^2\cdot 7$, $2^2\cdot 3\cdot 7$, $2^3\cdot3\cdot7$, $2^3\cdot3^2\cdot5\cdot7$, $2^4\cdot3^2\cdot5\cdot7$, $2^6\cdot3\cdot7$, $2^7\cdot3\cdot7$,
 $2^6\cdot3^2\cdot7$, $2^6\cdot3^4\cdot5^2\cdot7$, $2^7\cdot3^4\cdot5^2\cdot7$, $2^8\cdot3^4\cdot5^2\cdot7$, $2^{10}\cdot3^2\cdot7$, $2^{24}\cdot3^2\cdot7$. Moreover, $\soc(A/R)=(T\times R)/R$. This completes the proof.\qed

\section*{Acknowledgments}   
This work was partially supported by the National Natural Science Foundation of China (12331013, 12311530692, 12271024, 12161141005, 12071023, 12301461) and the 111 Project of China (B16002).
 	
\section*{Conflict of interest} 
The authors declare they have no financial interests.
	
\section*{Availability of data and materials} 
Data sharing not applicable to this article as no datasets were generated or analysed during the current study.


\begin{thebibliography}{99}
\bibitem{Magma}
Bosma, W., John, C., Catherine, P.: The Magma algebra system I: The user language. J. Symbolic Comput. \textbf{24}, 235--265 (1997)


\bibitem{Atlas}
Conway, J.H., Curtis, R.T., Norton, S.P., Parker, R.A., Wilson, R.A.: Atlas of Finite Groups. Clarendon Press, Oxford (1985)

\bibitem{DM-book}
Dixon, J.D., Mortimer, B.: Permutation groups. Graduate texts in mathematics. Springer, New York, NY (1996)

\bibitem{DCF}
Du, J.-L., Conder, M., Feng, Y.-Q.: Cubic core-free symmetric $m$-Cayley graphs. J. Algebraic Combin. \textbf{50}, 143--163 (2019)


\bibitem{DF2}
Du, J.-L., Feng, Y.-Q.: Tetravalent $2$-arc-transitive Cayley graphs on non-abelian simple groups. Comm. Algebra \textbf{47}, 4565--4574 (2019)

\bibitem{DFZh}
Du, J.L., Feng, Y.-Q., Zhou, J.-X.: Pentavalent symmetric graphs admitting vertex-transitive nonabelian simple groups. European J. Combin. \textbf{63}, 134--145 (2017)

\bibitem{FPW2002}
Fang, X.G., Praeger, C.E., Wang, J.: On the automorphism group of Cayley graphs of finite simple groups. J. Lond. Math. Soc. \textbf{66}, 563--578 (2002)

\bibitem{Fang4}
Fang, X.G., Li, C. H., Xu, M. Y.: On edge-transitive Cayley graphs of valency four.
European J. Combin. \textbf{25}, 1107--1116 (2004)

\bibitem{FangLW}
Fang, X.G., Jia, L.J., Wang, J.: On the automorphism groups of symmetric graphs admitting an almost simple group. Europ. J. Combin. \textbf{29}, 1467--1472 (2008)

\bibitem{FMW}
Fang, X.G., Ma, X.S., Wang, J.: On locally primitive Cayley graphs of finite simple groups. J. Combin. Theory Ser. A \textbf{118}, 1039--1051 (2011)

\bibitem{Godsil}
Godsil, C.D.: On the full automorphism group of a graph. Combinarorica \textbf{1}, 243--256 (1981)


\bibitem{GuoLiHua}
Guo, S.-T., Li, Y., Hua, X.-H.: $(G,s)$-transitive graphs of valency $7$. Algebra Colloq. \textbf{23}, 493--500 (2016)

\bibitem{Huppert-book}
Huppert, B.: Eudiche Gruppen I. Berlin: Springer (1967)

\bibitem{Huppert}
Huppert, B., Lempken, W.: Simple groups of order divisible by at most four primes. Proc. of the F. Scorina Gemel State University \textbf{16}, 64--75 (2000)


\bibitem{K-Lie}
Kleidman, P., Liebeck, M.: The Subgroup Structure of the Finite Classical Groups. Cambridge University Press, Cambridge (1990)



\bibitem{Lu24}
Lu, Z.P.: On $2$-arc-transitive graphs admitting a vertex-transitive simple group. Comm. Algebra 1--2 (2024)


\bibitem{Lorimer}
Lorimer. P.: Vertex-transitive graphs: symmetric graphs of prime valency. J. Graph Theory \textbf{8}, 55--68 (1984)



\bibitem{Li}
Li, C.H.: Isomorphims of finite Cayley graphs. Ph.D, Thesis, The University of Western Australia (1996)

\bibitem{LX2019}
 Li, C.H., Xia, B.:  Factorizations of almost simple groups with a factor having many nonsolvable composition factors. J. Algebra \textbf{528}, 439--473 (2019)


\bibitem{LiWZ24}
Li, J.J., Wang, Y., Zhou, J.-X.: On $2$-arc-transitive bi-Cayley graphs of finite simple groups. Discrete Math. \textbf{6}, 113972 (2024)


\bibitem{LMZ2022}
Li, J.J.,  Ma, J.C., Zhu, W.Y.: On $7$-valent symmetric Cayley graphs of finite simple groups. J. Algebraic Combin. \textbf{56}, 1097--1118 (2022)

\bibitem{LM2021}
Li, J.J., Ma, J.C.: On prime-valent symmetric Cayley graphs of finite simple groups. Electron. J. Combin. \textbf{28}, 1--42 (2021)

\bibitem{PXY22}
Pan, J.M., Xia, B.Z., Yin, F.G.: $2$-Arc-transitive Cayley graphs on alternating groups. J. Algebra \textbf{610}, 655--683 (2022)

\bibitem{PYL19}
Pan, J.M., Yin, F.G., Ling, B.: Arc-transitive Cayley graphs on non-abelian simple groups with soluble vertex stabilizers and valency seven. Discrete Math. \textbf{342}, 689--696 (2019)


\bibitem{PWZ2022}
Pan, J.M., Wu, C.X., Zhang, Y.N.: $2$-Arc-transitive hexavalent Cayley graphs on nonabelian simple groups. Comm. Algebra \textbf{50}, 4891--4905 (2022)

\bibitem{Sab64}
Sabiddusi, G.O.: Vertex-transitive graphs. Monatsh. Math. \textbf{68}, 426--438 (1964)


\bibitem{Wielandt-book}
Wielandt, H.: Finite Permutation Groups. Academic Press, New York and London (1964)


\bibitem{XFWX1}
Xu, S.J., Fang, X.G., Wang, J., Xu, M.Y.: On cubic $s$-arc transitive Cayley graphs of finite simple groups. European J. Combin. \textbf{26}, 133--143 (2005)

\bibitem{XFWX2}
Xu, S.J., Fang, X.G., Wang, J., Xu, M.Y.: $5$-arc-transitive cubic Cayley graphs on finite simple groups. European J. Combin. \textbf{28}, 1023--1036 (2007)


\bibitem{YFZC2021}
Yin, F.-G., Feng, Y.-Q., Zhou, J.-X., Chen, S.-S.: Arc-transitive Cayley graphs on non-abelian simple groups with prime valency. J. Combin. Theory Ser. A \textbf{177}, 105303 (2021)






%
%
%





%
%



%
%
%
%
%
%
%
%

\end{thebibliography}
\end{document}